\newtheorem{theorem}{Theorem}[section]
\newtheorem{lemma}[theorem]{Lemma}
\def\Box{\raisebox{3pt}{\framebox{\hbox to 3pt{\vbox to 3pt{}}}}}
\newenvironment{proof}{\medskip \noindent{\sc Proof:}}{\quad$\Box$\par\medskip} 
\newenvironment{proof2}[1]{\medskip \noindent{\sc Proof of #1:}}{\quad$\Box$\par\medskip} 
\newtheorem{conjecture}[theorem]{Conjecture}
\newtheorem{claim}[theorem]{Claim}
\newtheorem{question}{Question}
\newcommand{\sfact}{0.28}
\newcommand{\abs}[1]{\left\lvert{#1}\right\rvert}
\newcommand\sq{\mathbin{\text{\scalebox{.84}{$\square$}}}}
\DeclareMathOperator{\opt}{opt}
\begin{document}

\title{The Optimal Pebbling Number of Staircase Graphs}
\author[1,2]{Ervin Gy\H{o}ri\thanks{gyori.ervin@renyi.mta.hu}}
\author[3,4]{Gyula Y. Katona\thanks{kiskat@cs.bme.hu}}
\author[3]{L\'aszl\'o F. Papp\thanks{lazsa@cs.bme.hu}}
\author[1]{Casey Tompkins\thanks{ctompkins496@gmail.com}}
\affil[1]{Alfr\'ed R\'enyi Institute of Mathematics, Budapest, Hungary}
\affil[2]{Department of Mathematics, Central European University, Budapest, Hungary}
\affil[3]{Department of Computer Science and
Information Theory, Budapest University of Technology and Economics, Hungary}
\affil[4]{MTA-ELTE Numerical Analysis and Large Networks Research Group, Hungary}

\maketitle



\begin{abstract}
Let $G$ be a graph with a distribution of pebbles on its vertices.  A pebbling move consists of removing two pebbles from one vertex and placing one pebble on an adjacent vertex.  The optimal pebbling number of $G$ is the smallest number of pebbles which can placed on the vertices of $G$ such that, for any vertex $v$ of $G$, there is a sequence of pebbling moves resulting in at least one pebble on $v$.   We determine the optimal pebbling number for several classes of induced subgraphs of the square grid, which we call staircase graphs.  
\end{abstract}

\section{Introduction}
Graph pebbling is a game on graphs introduced by Saks and Lagarias. The main framework is the following: A distribution of pebbles is placed on the vertices of a simple graph. A pebbling move removes two pebbles from a vertex and places one pebble on an adjacent vertex.   The goal is to reach any specified vertex by a sequence of pebbling moves.  
We begin with some notation which we will need to state our results.

Let $G$ be a simple graph.  We denote the vertex and edge set of $G$ by $V(G)$ and $E(G)$, respectively.     A \emph{pebbling distribution} $P$ is a function from $V(G)$ to the nonnegative integers.  We say that $G$ has $P(v)$ pebbles placed at the vertex $v$ under the distribution $P$.  We say that a vertex $v$ is occupied if $P(v)>0$ and unoccupied otherwise.   The \emph{size} of a pebbling distribution $P$, denoted $\abs{P}$, is the total number of pebbles placed on the vertices of $G$.

Let $u$ be a vertex with at least two pebbles under $P$, and let $v$ be a neighbor of $u$.  A \emph{pebbling move} from $u$ to $v$ consists of removing two pebbles from $u$ and adding one pebble to $v$.  That is, a pebbling move yields a new pebbling distribution $P'$ with $P'(u) = P(u)-2$ and $P'(v) = P(v) + 1$.  We say that a vertex $v$ is \emph{$k$-reachable} under the distribution $P$ if we can obtain, after a sequence of pebbling moves, a distribution with at least $k$ pebbles on $v$.  If $k=1$ we say simply that $v$ is reachable under $P$.   More generally, a set of vertices $S$ is $k$-reachable under the distribution $P$ if, after a sequence of pebbling moves, we can obtain a distribution with at least a total of $k$ pebbles on the vertices in $S$. 

A pebbling distribution $P$ on $G$ is \emph{solvable} if all vertices of $G$ are reachable under $P$.  Similarly, $P$ is \emph{$k$-solvable} if all vertices are $k$-reachable.  A pebbling distribution on $G$ is \emph{$k$-optimal} if it is $k$-solvable and its size is minimal among all of the $k$-solvable distributions of $G$; when $k=1$ we simply say optimal.  Note that $k$-optimal distributions are usually not unique. 
 
The \emph{optimal pebbling number} of $G$, denoted by $\pi_{\opt}(G)$, is the size of an optimal pebbling distribution.   In general, the decision problem for this graph parameter is NP-complete \cite{NPhard}. 

We denote with $P_n$ and $C_n$ the path and cycle on $n$ vertices, respectively.   The \emph{Cartesian product} $G\sq H$ of graphs $G$ and $H$ is defined in the following way: $V(G\sq H)=V(G)\times V(H)$ and $\{(g_1,h_1),(g_2,h_2)\}\in E(G\sq H)$ if and only if $\{g_1,g_2\}\in E(G)$ and $h_1=h_2$ or $\{h_1,h_2\}\in E(H)$ and $g_1=g_2$.

The optimal pebbling number is known for several graphs including paths, cycles \cite{ladder,path1,path2}, caterpillars \cite{caterpillar} and $m$-ary trees \cite{m-ary}.  
The optimal pebbling number of grids has also been investigated.   Exact values were proved for $P_n \sq P_2$ \cite{ladder} and $P_n\sq P_3$ \cite{yerger}.  The question for bigger grids is still open. We gave a construction, which is restated in the next paragraph, for big square grids in \cite{gridnote}. Furthermore, we are going to improve the lower bound on the optimal pebbling number of these grids in a forthcoming paper \cite{gridlower}.  

The distribution $P$ which gives the current best upper bound for the optimal pebbling number of the square grid takes groups of seven consecutive diagonals and places pebbles on the middle one  (see Figure~\ref{squaregrid}).   Using these pebbles, it is possible to reach any vertex on any diagonal in the group.


\begin{figure}
\centering
\begin{tikzpicture}[scale=.45]
\input{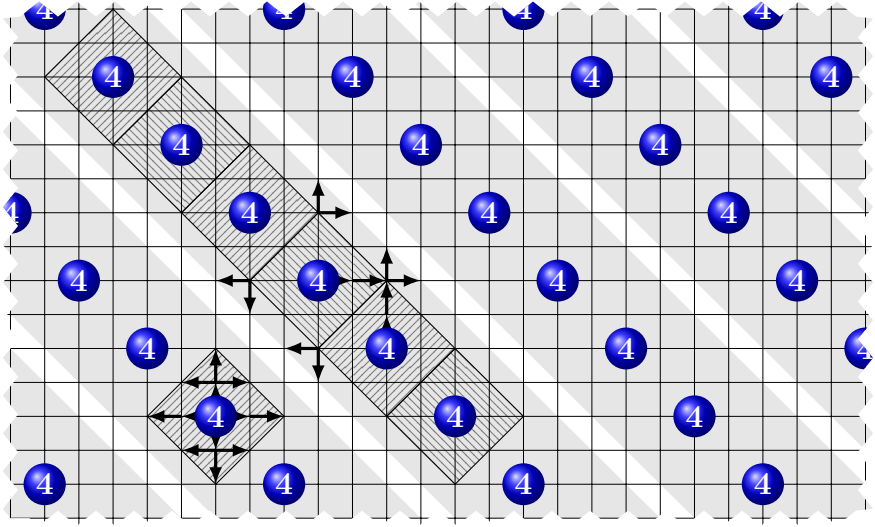}

 \clip[decorate, decoration={zigzag,segment length=5mm,amplitude=1mm}](-1,-1) rectangle (24,14);
\foreach \x in {0,...,25} 
\foreach \y in {0,...,14}
\coordinate (X\x_Y\y) at (\x,\y);

\foreach \i in {-18,-11,-4,3,10,17,24}
\fill[color=black!10]  (\i,15) -- +(6,0) -- +(22,-16) -- +(16,-16);
\draw[pattern=north east lines, pattern color=black!50] (7,2)  -- +(-2,2) -- +(-4,0) -- +(-2,-2) --  (7,2);

\foreach \i in {1,...,3}{
\draw[pattern=north west lines, pattern color=black!50] (4*\i,16-4*\i)  -- +(2,-2) -- +(0,-4) -- +(-2,-2)  --  (4*\i,16-4*\i);
\draw[pattern=north east lines, pattern color=black!50] (4*\i,16-4*\i)  -- +(-2,2) -- +(-4,0) -- +(-2,-2) --  (4*\i,16-4*\i);
}
\draw [very thin, step=1cm] (-1.5,-1.5) grid (25.5,14.5);


\foreach \s/\t in {
X5_Y2/X5_Y3,
X5_Y2/X6_Y2,
X5_Y2/X4_Y2,
X5_Y2/X5_Y1,
X5_Y3/X5_Y4,
X5_Y3/X6_Y3,
X5_Y3/X4_Y3,
X5_Y1/X5_Y0,
X5_Y1/X6_Y1,
X5_Y1/X4_Y1,
X4_Y2/X3_Y2,
X6_Y2/X7_Y2,
X8_Y8/X8_Y9,
X8_Y8/X9_Y8,
X10_Y6/X11_Y6,
X10_Y6/X10_Y7,
X6_Y6/X6_Y5,
X6_Y6/X5_Y6,
X8_Y4/X8_Y3,
X8_Y4/X7_Y4,
X10_Y4/X10_Y5,
X10_Y5/X10_Y6,
X8_Y6/X9_Y6,
X9_Y6/X10_Y6}
\draw [nyil] (\s) -- (\t);
\foreach \x in {-1,...,7} 
	\foreach \y in  {-3,...,3}  {
	\pgfmathtruncatemacro{\i}{(2*\x+7*\y)}
	\pgfmathtruncatemacro{\j}{(14-2*\x)}
\ifnum \i>-2
\ifnum \i<26 
\node [pebble] (N_\x_\y) at (\i,\j) {$\mathbf{4}$}
\fi
\fi;

}

\foreach \p in {0,...,5} {
\pgfmathtruncatemacro{\q}{(\p+1)}
}
\end{tikzpicture}
\label{squaregrid}
\caption{Solvable distribution of the square grid.}
\end{figure}

We conjecture that $P$ is optimal, however we do not know a proof for this. Can we at least show that the induced distributions on these smaller graphs are optimal?  If this was not the case, it would refute the conjecture.  These considerations provide the main motivation for the present paper.



We investigate a family of graphs which we call \emph{staircase graphs}. These graphs are connected induced subgraphs of the square grid. The width seven instances correspond to the groups of seven diagonals discussed above.   

Let $SG_{n,n}=P_n\sq P_n$ and let $SG = P_{\infty} \sq P_{\infty}$ be the infinite square grid where $P_{\infty}$ is the doubly infinite path with vertex set $\mathbb{Z}$ and edge set $\{\{i,i+1\}:i \in \mathbb{Z}\}$.  

For any $k \in \mathbb{Z}$, we  say that $D^{+}_k=\left \{ \{i,j\} \in V(SG): i-j=k \right\}$ is a \emph{positive diagonal} of $SG$.
Similarly we define the \emph{negative diagonal}: $D^{-}_k=\left  \{ \{i,j\} \in V(SG): i+j=k \right\}$.    A staircase graph will be defined in terms of the intersection of a set of consecutive positive diagonals in $SG$ with a set of consecutive negative diagonals.  When the number of diagonals taken in each direction is odd, there will be two nonisomorphic graphs to consider.

If $m$ is odd, let $S'_{m,n}$ be the graph induced by the vertex set $\left( \cup_{j=1}^m D^-_{j}\right)\cap\left (\cup_{i=1}^n D^+_{i}\right ) $, and let $S_{m,n}$ be the graph induced by $\left( \cup_{j=1}^m D^-_{j}\right) \cap \left (\cup_{i=0}^{n-1} D^+_{i}\right )$. 

If $m$ is even, let $S_{m,n}$ be the graph induced by the vertex set $ \left( \cup_{j=1}^m D^-_{j}\right)\cap\left (\cup_{i=1}^n D^+_{i}\right )$. In this case we have only one isomorphism class.


Note that $S'_{m,n}\cong S_{m,n}$ in case when $n$ is even.   We also remark that $S_{m,n}\cong S_{n,m}$; nevertheless, we say that the first and the second parameters are the \emph{width} and the \emph{length} of the staircase graph, respectively, and generally assume that $n\geq m$.  We will refer to the graphs $S_{m,n}$ and $S_{m,n}'$ as \emph{m-wide staircase graphs}.

\begin{figure}
\centering
\includegraphics[scale=\sfact]{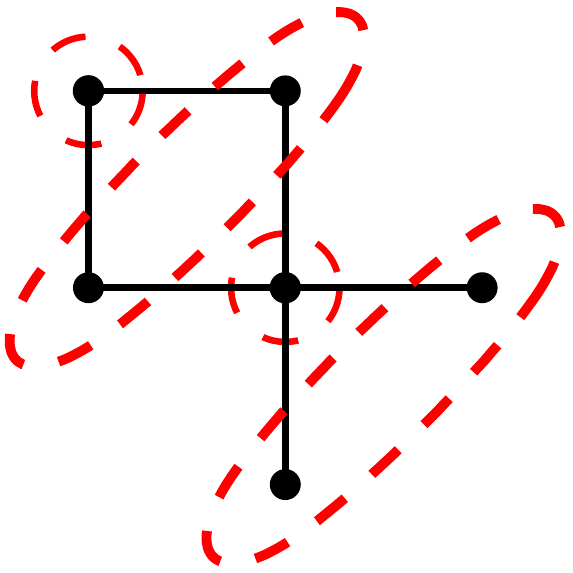}
\label{1slash}
\caption{Slashes in the three-wide staircase graph.}
\end{figure}
For simplicity, we call a nonempty intersection of a staircase graph and a positive diagonal a \emph{slash} (See Figure \ref{1slash} where each dashed ellipse is a slash).

The optimal pebbling number of ladders is proved by a technique based on induction and cutting. We use this technique for narrow staircase graphs and extend it for wider ones.

\section{Results}

\subsection{Three-wide staircases}

The results in the 3 and 4-wide cases depend on the value of $n$ modulo 4. Therefore we write $4k+r$ instead of $n$, where $r\in\{0,1,2,3\}$. 
\begin{theorem}
When $4k+r\geq 2$, then

%

\begin{align*}
\pi_{\opt}(S_{3,4k+r})&=3k+r, \\
\pi_{\opt}(S'_{3,4k+r})&=\begin{cases}
3k+2 & \text{if }r=3\\
3k+r & \text{otherwise}.
\end{cases}
\end{align*}

\label{theorem3stairs}

\end{theorem}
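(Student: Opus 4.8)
My plan is to begin by making the structure of these graphs explicit. Intersecting a positive diagonal $D^+_k$ with the three negative diagonals $D^-_1,D^-_2,D^-_3$ yields two vertices (on $D^-_1$ and $D^-_3$) for one parity of $k$ and a single vertex (on $D^-_2$) for the other, so the slashes alternate in size $1,2,1,2,\dots$ along the length. A short coordinate computation shows that each single-vertex slash is adjacent to both vertices of each neighbouring two-vertex slash, while the two vertices of a common slash are nonadjacent; hence $S_{3,n}$ and $S'_{3,n}$ are chains of diamonds glued at the single vertices. In particular the graph distance between two vertices equals the difference of their slash indices, except that the two vertices of one slash lie at distance $2$. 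I would record that for odd $n$ the graph $S_{3,n}$ has single-vertex slashes at both ends whereas $S'_{3,n}$ has two-vertex slashes at both ends; this end effect is exactly what produces the discrepancy at $r=3$ (and the isomorphism when $n$ is even).

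For the upper bound I would exhibit one explicit distribution in each case, placing pebbles only on the single-vertex (that is, $D^-_2$) slashes in a near-alternating pattern $2,1,2,1,\dots$ and nothing on the two-vertex slashes. The verification is immediate from the structure: a single vertex carrying two pebbles reaches both vertices of each adjacent two-vertex slash in one move, every occupied single vertex is reachable by itself, so it suffices to arrange the pattern so that every two-vertex slash is adjacent to some single vertex carrying a $2$, and (in the $S'$ case) so that each end slash, whose only neighbour is a single vertex, is covered by a $2$ there. Fixing the boundary of the pattern according to $r$ and to the type gives distributions of size $3k+r$, and of size $3k+2$ for $S'$ with $r=3$; a matching domination count on the single vertices shows these are exactly the sizes the pattern requires.

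The lower bound is the substance of the argument, and here I would use the induction-and-cutting method the introduction advertises. The target values increase by exactly $3$ when $n$ increases by $4$ in every case, which dictates the inductive step: from a solvable distribution on a staircase of length $n$, produce a solvable distribution on the staircase of length $n-4$ of the same type using at least three fewer pebbles, giving $\pi_{\opt}$(length $n$)$\,\ge\,\pi_{\opt}$(length $n-4$)$\,+\,3$. Peeling the last four slashes preserves the type (the new end slash has the same parity as the old one) and preserves the residue $r$, so the whole statement then follows from this single peeling lemma together with the finitely many base cases $S_{3,2}\cong P_3$, $S_{3,3}\cong C_4$, $S'_{3,3}\cong K_{1,4}$, $S_{3,4}$, $S_{3,5}$, $S'_{3,5}$, which I would check by hand and which carry all the exact additive constants. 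To prove the peeling lemma I would use the cost/excess and cooperation functions of the toolkit: the four end slashes contain six vertices, and to make all of them reachable the distribution must either keep at least three pebbles on the block itself or be supplemented by pebbles pushed across the cut from the left, in which case the region left of the cut carries corresponding spare capacity that is no longer needed once the block is deleted. Formalising this as a bound of the form ``covering the end block immobilises at least three pebbles'' yields the required saving.

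The main obstacle is precisely the robustness of the cut: because a large pile placed well to the left can still reach vertices near the right end, one cannot simply count the pebbles lying on the last four slashes. The resolution is to bound the maximum cooperation that a bounded region can deliver across the cut---this is where the exponential decay of reachability with distance enters---and to run the argument through a case analysis on the local configuration near the cut, on the residue $r$, and on whether the peeled end is a single- or a two-vertex slash. Getting the additive constants exactly right, in particular the single unit saved by $S'$ when $r=3$, is the delicate part; I would isolate all of it in the base cases and keep the inductive step a clean ``minus four slashes, minus three pebbles'' reduction.
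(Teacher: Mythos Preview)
Your upper bound and the structural description of the three-wide staircase are fine and essentially match the paper. The lower bound, however, has a genuine gap, and the paper closes it by a different and much cleaner mechanism than the one you propose.

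Your plan is to peel a fixed block of four end slashes and prove a lemma of the form ``any solvable distribution on length $n$ yields one on length $n-4$ with at least three fewer pebbles''. You correctly identify the obstacle: pebbles can interact across the cut in both directions, so neither ``the end block costs at least three'' nor ``the truncated distribution is still solvable'' follows from a local count. Your proposed resolution---bounding cooperation across the cut via exponential decay and a case analysis on the local configuration---is not carried out, and it is not clear that it terminates in finitely many tractable cases. As stated, the peeling lemma is an assertion equivalent in strength to the lower bound itself.

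The paper avoids this difficulty entirely by letting the distribution choose the cut rather than fixing it. Collapse each slash to a single vertex to obtain $P_n$; since the $2$-optimal pebbling number of $P_n$ is $n+1$, any solvable distribution with fewer than $n-1$ pebbles leaves at least three slashes not $2$-reachable, hence at least one \emph{inner} slash not $2$-reachable. At such a slash no pebbling move can cross in at least one direction (else two pebbles would arrive independently from the two sides), so deleting those edges decomposes the graph into two shorter staircases, each carrying an honestly solvable restriction of $P$ with no interaction term to control. Induction on both pieces---with a case split on the parity of the cut position, which governs whether the pieces are of $S$ or $S'$ type---then yields the exact lower bounds after checking base cases up to $n=7$. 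This variable-cut idea, driven by the path result, is the missing ingredient in your outline.
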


\begin{figure}
\centering
\scalebox{\sfact}{\input{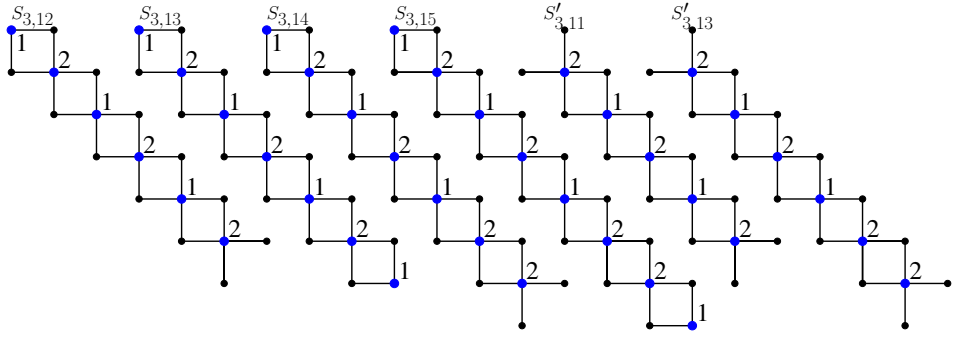}}

\caption{Optimal distributions of $S_{3,n}$ and $S'_{3,n}$ when $n\geq 5$.}
\label{Sopt}
\end{figure}

We also note that $\pi_{\opt}(S_{3,1}) = 1$ and $\pi_{\opt}(S_{3,1}') = 2$, since these cases may appear in our induction. Before we present the proof we establish some lemmas. Each of them utilizes the one preceding it, and the first one strongly relies on the following theorem from \cite{ladder}.

\begin{theorem}[\cite{ladder}]
A 2-optimal distribution of the $n$-vertex path contains $n+1$ pebbles.
\label{2optimal}
\end{theorem}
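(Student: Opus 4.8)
The plan is to prove that the minimum size of a $2$-solvable distribution on $P_n$ is exactly $n+1$ by establishing matching upper and lower bounds. For the upper bound I would exhibit one explicit distribution. Writing the vertices as $v_1,\dots,v_n$, place $2$ pebbles on every odd-indexed vertex, and, when $n$ is even, add a single extra pebble on $v_n$; this uses exactly $n+1$ pebbles. Every occupied vertex is trivially $2$-reachable, each empty vertex $v_{2j}$ collects one pebble from each occupied neighbour $v_{2j-1}$ and $v_{2j+1}$, and in the even case the endpoint $v_n$ uses its own pebble together with one pushed from $v_{n-1}$. Hence the distribution is $2$-solvable, giving the bound $n+1$ from above.

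The real content is the lower bound: every $2$-solvable distribution on $P_n$ has at least $n+1$ pebbles. I would first point out that the naive weight argument is too weak. Assigning weight $2^{-|i-t|}$ to a pebble on $v_i$ relative to a target $v_t$, the necessary condition for $2$-reaching $v_t$ is $\sum_i P(v_i)2^{-|i-t|}\ge 2$; but the optimum of the resulting linear program lies strictly below $n+1$ (for $P_4$ it is only $4$, while the true answer is $5$), so integrality alone cannot recover the extra pebble and a genuinely combinatorial induction is needed. The key tool is a \emph{leaf-cutting lemma}: for a distribution $Q$ on a path with leaf $v_1$ carrying $p:=Q(v_1)$ pebbles, let $Q'$ be obtained on the sub-path $v_2,\dots,v_\ell$ by setting $Q'(v_2)=Q(v_2)+\lfloor p/2\rfloor$ and keeping the other values. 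Since $v_1$'s only neighbour is $v_2$ and at most $\lfloor p/2\rfloor$ pebbles can be pushed across that edge, every target in $\{v_2,\dots,v_\ell\}$ is $r$-reachable under $Q$ if and only if it is $r$-reachable under $Q'$; moreover the largest number of pebbles collectible on $v_1$ itself is $p+\lfloor L/2\rfloor$, where $L$ is the maximum collectible on $v_2$ using only $v_2,\dots,v_\ell$.

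The induction will not close on the plain statement, so I would strengthen it to track the reachability at the peeled endpoint. For $k\ge 2$ let $U(\ell,k)$ assert: every $2$-solvable distribution on $P_\ell$ whose left endpoint is $k$-reachable has size at least $\ell+\lceil k/2\rceil$. The theorem is the special case $U(n,2)$, because $2$-solvability already makes the endpoint $2$-reachable. I prove $U(\ell,k)$ by induction on $\ell$, uniformly in $k$. The base $\ell=1$ is immediate, since a single vertex must then hold $\max(2,k)=k$ pebbles and $k\ge 1+\lceil k/2\rceil$ for $k\ge2$. For the step, peel the leaf $v_1$ with $p$ pebbles. If $p\ge k$, the cut lemma makes $Q'$ $2$-solvable, so $U(\ell-1,2)$ gives $|Q'|\ge\ell$ and hence $|Q|=|Q'|+\lceil p/2\rceil\ge\ell+\lceil k/2\rceil$. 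If $p<k$, then $k$-reachability of $v_1$ forces $v_2$ to be $2(k-p)$-reachable within $v_2,\dots,v_\ell$, so under $Q'$ the endpoint $v_2$ is $k'$-reachable with $k'=2(k-p)+\lfloor p/2\rfloor\ge 2$; applying $U(\ell-1,k')$ and restoring the $\lceil p/2\rceil$ pebbles spent at the leaf reduces the claim to the numerical inequality $\lceil k'/2\rceil+\lceil p/2\rceil\ge 1+\lceil k/2\rceil$, which simplifies to $\lfloor k/2\rfloor\ge 1+\lfloor p/4\rfloor$ and holds for all $0\le p<k$.

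I expect the main obstacle to be finding the correct strengthening rather than any single calculation. A one-vertex peel on the bare statement breaks exactly when the endpoint is empty: the distribution $(0,4,0)$ on $P_3$ shows that an empty endpoint need not cost an extra pebble, because the pebbles that reach it do double duty for the interior targets. The parameter $k$ together with the $\lceil k/2\rceil$ accounting is precisely what prevents this double-counting, and the only delicate point is confirming that the induced recursion on $(\ell,k)$ terminates and that the floor inequality $\lfloor k/2\rfloor\ge 1+\lfloor p/4\rfloor$ never fails in the relevant range.
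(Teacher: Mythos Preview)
The paper does not prove this theorem at all; it is quoted from Bunde, Chambers, Cranston, Milans and West \cite{ladder}, so there is no in-paper argument to compare against. What the source paper actually does (restated here as Lemma~\ref{primesegments}) is structural: it shows that every $2$-optimal distribution on $P_n$ decomposes into ``prime segments'' separated by single unoccupied vertices, and a direct count over the two segment types then gives exactly $n+1$ pebbles.

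Your approach is genuinely different and, as far as I can see, correct. Instead of classifying optimal distributions, you run a one-vertex peel with the strengthened invariant $U(\ell,k)$ that records the reachability level at the exposed endpoint. The leaf-cutting lemma is valid on a path because moving pebbles in both directions across the leaf edge is strictly wasteful, so the contribution of $v_1$ to any target in $v_2,\dots,v_\ell$ is exactly $\lfloor p/2\rfloor$ extra pebbles on $v_2$, and conversely the maximum on $v_1$ is $p+\lfloor L/2\rfloor$. The two cases of the induction close correctly: for $p\ge k$ you get $|Q|=|Q'|+\lceil p/2\rceil\ge \ell+\lceil k/2\rceil$ from $U(\ell-1,2)$, and for $p<k$ the reduced endpoint is at least $k'=2(k-p)+\lfloor p/2\rfloor\ge 2$ reachable, and the residual inequality $\lfloor k/2\rfloor\ge 1+\lfloor p/4\rfloor$ indeed holds for all $0\le p\le k-1$ with $k\ge 2$ (the tight case is $p=k-1$, where $\lfloor k/2\rfloor-\lfloor(k-1)/4\rfloor\ge 1$ for every $k\ge 2$).

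The trade-off is that the prime-segment route yields the complete description of all $2$-optimal distributions, which this paper later exploits in Lemma~\ref{uniquecor}; your induction gives only the size bound, but it is self-contained and avoids the case analysis needed to pin down the segment types.
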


\begin{lemma}
\label{existsnot2reachable}
Let $G$ be $S_{m,n}$ or $S'_{m,n}$ and suppose there is a solvable distribution $P$ on $G$ such that $|P|<n+1$. Then there is a slash in $G$ which is not 2-reachable under $P$. 
\end{lemma}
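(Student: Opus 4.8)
The plan is to \emph{contract each slash to a single vertex} and thereby reduce the statement to Theorem~\ref{2optimal} on the path. The first observation is that every edge of the square grid changes the index of the positive diagonal by exactly $\pm 1$: for the horizontal edge $\{(i,j),(i+1,j)\}$ we have $(i+1)-j=(i-j)+1$, and for the vertical edge $\{(i,j),(i,j+1)\}$ we have $i-(j+1)=(i-j)-1$. Consequently a pebble can never move within a single slash; every pebbling move transfers a pebble between two \emph{consecutive} slashes.

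Next I would record that $G$ has exactly $n$ slashes. Each of the $n$ positive diagonals in the defining range meets the band of negative diagonals $D^-_1,\dots,D^-_m$: for $m\ge 2$ the set $\{1,\dots,m\}$ contains consecutive integers, hence both parities, so the integrality condition $i\equiv j \pmod 2$ for a common grid point is always achievable, and each positive diagonal contributes a nonempty slash. Since $G$ is connected and every edge joins consecutive slashes, contracting each slash to a vertex yields the path $P_n$ whose vertices are the slashes $s_1,\dots,s_n$ taken in their natural linear order.

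Now I would define the \emph{projected distribution} $\bar P$ on $P_n$ by letting $\bar P(s)$ be the total number of pebbles that $P$ places on the vertices of the slash $s$; clearly $\abs{\bar P}=\abs{P}$. By the first paragraph, a single pebbling move in $G$ from $u$ (in slash $s$) to $v$ (in slash $s'$) satisfies $\abs{s-s'}=1$ and projects to the pebbling move on $P_n$ removing two pebbles from $s$ and adding one to $s'$; this move is legal because $\bar P(s)\ge P(u)\ge 2$. Hence any sequence of moves in $G$ projects to a valid sequence of moves on $P_n$, and if a slash $s$ is $2$-reachable in $G$ under $P$ then the vertex $s$ is $2$-reachable under $\bar P$ on $P_n$.

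Finally I would argue by contradiction. If \emph{every} slash of $G$ were $2$-reachable under $P$, then by the previous paragraph every vertex of $P_n$ would be $2$-reachable under $\bar P$, making $\bar P$ a $2$-solvable distribution of $P_n$ with $\abs{\bar P}=\abs{P}<n+1$. This contradicts Theorem~\ref{2optimal}, which guarantees that any $2$-solvable distribution of the $n$-vertex path has at least $n+1$ pebbles; hence some slash is not $2$-reachable. The only delicate point is verifying that the projection faithfully transports reachability — specifically, that each individual grid move descends to a legal path move and that the sum of pebbles on a $2$-reached slash becomes two pebbles on one path vertex — which the $\pm 1$ computation of the first paragraph settles; the slash count and the identification with $P_n$ are then routine.
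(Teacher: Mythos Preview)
Your proof is correct and follows essentially the same approach as the paper: collapse each slash to a single vertex of $P_n$, observe that the collapsed distribution has the same size, and invoke Theorem~\ref{2optimal} to conclude that some vertex of $P_n$ (hence some slash) is not $2$-reachable. You supply more explicit justification than the paper does---the $\pm 1$ diagonal-index computation and the verification that $G$ has exactly $n$ slashes---but the underlying argument is identical to the paper's ``collapsing technique.''
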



We are going to use the \emph{collapsing technique} introduced in \cite{ladder}. Let $G$ and $H$ be simple graphs. We say that $H$ is a \emph{quotient} of $G$, if there is a surjective mapping $\phi: V(G)\rightarrow V(H)$ such that $\{h_1,h_2\}\in E(H)$ if and only if there are $g_1,g_2\in V(G)$, where $\{g_1,g_2\}\in E(G)$, $h_1=\phi(g_1)$ and $h_2=\phi(g_2)$. We say that $\phi$ collapses $G$ to $H$, and if $P$ is a pebbling distribution on $G$, then the \emph{collapsed distribution} $P_{\phi}$ on $H$ is defined in the following way:
$P_{\phi}(h)=\sum_{g\in V(G)| \phi(g)=h}P(g)$.

\begin{proof}
Let $\phi$ be a mapping which maps each slash in $G$ to a vertex in $P_n$ in such a way that consecutive slashes are mapped to adjacent vertices. 
It is easy to see that if a slash was 2-reachable under $P$, then its collapsed image is also 2-reachable under $P_{\phi}$. 
$P$ and $P_{\phi}$ have less than $n+1$ pebbles. Therefore, Theorem~\ref{2optimal} yields that $P_{\phi}$ is not 2-solvable, therefore there is a vertex in $P_n$ which is not 2-reachable under $P_{\phi}$ and the corresponding slash is not 2-reachable under $P$.
\end{proof}

\begin{lemma}
Let $G$ be $S_{m,n}$ or $S'_{m,n}$  and suppose there is a solvable distribution $P$ on $G$ such that $|P|<n-1$. Then there is slash in $G$ which is neither the first nor the last slash and is not 2-reachable under $P$. 
\label{not2reachable}
\end{lemma}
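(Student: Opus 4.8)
The plan is to strengthen the argument of Lemma~\ref{existsnot2reachable} by collapsing $G$ not merely onto $P_n$, but in effect onto $P_{n-2}$, so that the two extreme slashes get absorbed into their neighbours and hence cannot be the slashes responsible for a failure of $2$-reachability. I would argue by contradiction: assume $P$ is solvable, $|P|<n-1$, and that \emph{every} slash other than the first and the last is $2$-reachable under $P$; the goal is to deduce $|P|\ge n-1$.

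First I would invoke the collapse $\phi\colon G\to P_n$ used in the previous proof, which sends the $i$-th slash to the $i$-th vertex $v_i$ of $P_n$, sends consecutive slashes to adjacent vertices, and preserves $2$-reachability. Thus $|P_\phi|=|P|<n-1$, and by the contradiction hypothesis the interior vertices $v_2,\dots,v_{n-1}$ are all $2$-reachable under $P_\phi$. Next I would apply a second collapse $\psi\colon P_n\to P_{n-2}$ that identifies $v_1$ with $v_2$ and $v_n$ with $v_{n-1}$, while leaving $v_3,\dots,v_{n-2}$ fixed. One checks directly that the quotient of a path under this map is again a path, namely $P_{n-2}$, and that $|(P_\phi)_\psi|=|P_\phi|$ since collapsing preserves the total number of pebbles. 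Because collapsing preserves $2$-reachability, the images $\psi(v_2)$ and $\psi(v_{n-1})$ inherit $2$-reachability from $v_2$ and $v_{n-1}$, while the untouched vertices $v_3,\dots,v_{n-2}$ remain $2$-reachable; hence $(P_\phi)_\psi$ is $2$-solvable on $P_{n-2}$.

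At this point Theorem~\ref{2optimal} closes the argument: a $2$-solvable distribution of $P_{n-2}$ contains at least $(n-2)+1=n-1$ pebbles, so $|P|=|(P_\phi)_\psi|\ge n-1$, contradicting $|P|<n-1$. Therefore some slash which is neither the first nor the last fails to be $2$-reachable under $P$, which is exactly the conclusion.

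The routine points to verify are that $\psi$ is a genuine quotient map (for $n\ge 4$ the two identifications are disjoint and yield $P_{n-2}$; for $n=3$ they coincide, collapsing the whole path onto the one-vertex path $P_1$, where Theorem~\ref{2optimal} still gives the bound $2=n-1$), and that two successive collapses preserve $2$-reachability, which follows by applying the preservation statement from the proof of Lemma~\ref{existsnot2reachable} twice. I do not anticipate a serious difficulty here; the only place demanding care is the bookkeeping at the two ends of the path, namely checking that contracting precisely the boundary slashes into their neighbours is what converts the bound $n+1$ of Lemma~\ref{existsnot2reachable} into the desired $n-1$, and that the vertices $v_2,\dots,v_{n-1}$ correspond exactly to the slashes excluded from being ``first or last.''
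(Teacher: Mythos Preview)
Your argument is correct. The two collapses $\phi$ and $\psi$ are legitimate quotient maps in the paper's sense, the total number of pebbles is preserved, and $2$-reachability passes to the quotient, so Theorem~\ref{2optimal} on $P_{n-2}$ yields $|P|\ge n-1$, the desired contradiction. The boundary cases $n\le 4$ you flag also behave as you describe.

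However, your route differs from the paper's. Rather than collapsing the endpoints away, the paper argues directly on $G$: if the only slashes that fail to be $2$-reachable lie among the first and the last, then there are at most two of them; placing one extra pebble on each such slash (which makes it $2$-reachable, since it was already $1$-reachable under the solvable $P$) produces a distribution of size at most $|P|+2<n+1$ in which every slash is $2$-reachable, contradicting Lemma~\ref{existsnot2reachable}. This ``add two pebbles'' trick is shorter---it needs no second quotient and no endpoint bookkeeping---and it actually establishes the slightly stronger fact that at least \emph{three} slashes fail to be $2$-reachable. Your approach, on the other hand, is a clean structural extension of the collapsing technique itself and sidesteps the small verification that adding a single pebble to a $1$-reachable slash makes it $2$-reachable; it would also generalise immediately if one ever wanted to exclude more than two boundary slashes.
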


\begin{proof}
If there are only two slashes which are not 2-reachable under $P$, then we put a pebble on a vertex in both slashes and all slashes are 2-reachable with less than $n+1$ pebbles. This contradicts Lemma \ref{existsnot2reachable}. Therefore, there are at least three slashes which are not 2-reachable under $P$. One of them is neither the first nor the last slash.
\end{proof}

\begin{lemma}
If slash $k$ is not 2-reachable under a distribution $P$ and it is not the first or the last slash, then there is no possible pebbling move between either slash $k-1$ and $k$ or slash $k$ and $k+1$.
\label{cancutlemma}
\end{lemma}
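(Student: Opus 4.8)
The plan is to argue by contradiction, exploiting the fact that slash $k$ is a vertex cut of $G$ (this is where the hypothesis that slash $k$ is neither first nor last is used, so that both slash $k-1$ and slash $k+1$ exist). First I would record the crucial consequence of non-$2$-reachability: if some vertex of slash $k$ ever carried two pebbles in a distribution reachable from $P$, then slash $k$ would already be $2$-reachable, contrary to hypothesis. Hence no reachable distribution places two pebbles on a single vertex of slash $k$, and in particular \emph{no pebbling move can ever originate on slash $k$}. Since every edge of $G$ joins two \emph{consecutive} slashes, the only conceivable crossing moves at the two interfaces in question are moves \emph{into} slash $k$, that is, from slash $k-1$ or from slash $k+1$.

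Next I would set up the cut. Write $L$ for the union of slashes $1,\dots,k-1$ and $R$ for the union of slashes $k+1,\dots,n$; because every edge joins consecutive slashes, slash $k$ separates $L$ from $R$. Combined with the previous observation that a pebble can never leave slash $k$, this shows that a pebble starting in $L$ can never reach $R$, and conversely, so the pebbling dynamics on the two sides are independent of one another.

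The heart of the argument is the following combination step. Suppose, for contradiction, that both interfaces admit a crossing move; by the first paragraph both are inward, so there is a distribution reachable from $P$ in which some vertex $u$ of slash $k-1$ holds at least two pebbles and has a neighbour in slash $k$, and likewise a reachable distribution in which some vertex $w$ of slash $k+1$ holds at least two pebbles and has a neighbour in slash $k$. By the independence just established I can realise the first event using only moves inside $L$ and the second using only moves inside $R$: concretely, take a move sequence witnessing two pebbles on $u$ and delete every move not contained in $L$; since no move ever brings a pebble into $L$ from outside, deleting such moves only increases the pebble counts on $L$, so the restricted sequence is still legal and still finishes with at least two pebbles on $u$, and symmetrically for $w$ on $R$. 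As these two sequences act on disjoint vertex sets and avoid slash $k$ entirely, I can perform them consecutively, reaching a single distribution in which $u$ and $w$ simultaneously carry at least two pebbles each. Firing the two inward moves $u\to(\text{slash }k)$ and $w\to(\text{slash }k)$ then deposits a total of at least two pebbles on slash $k$, so slash $k$ is $2$-reachable, a contradiction. Therefore at least one of the two interfaces admits no crossing move, which is exactly the assertion of the lemma.

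I expect the delicate point to be this combination step, specifically the claim that the witnesses for $u$ and $w$ can be produced by disjoint, non-interfering move sequences. The justification rests on the domination property that restricting a legal sequence to one side of the cut keeps it legal and does not decrease the pebble counts on that side, which in turn depends essentially on the earlier fact that no move ever leaves slash $k$. Everything else reduces to the bookkeeping observation about which slashes a grid edge can join.
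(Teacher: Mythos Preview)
Your argument is correct and follows the same strategy as the paper: observe that no pebbling move can originate on slash $k$, deduce that the dynamics on the two sides are independent, and combine hypothetical inward moves from slash $k-1$ and slash $k+1$ to derive a contradiction with non-$2$-reachability. Your restriction/domination step is a careful justification of what the paper simply asserts when it says ``these moves could be performed independently.''
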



\begin{proof} 
Since slash $k$ is not 2-reachable, it is impossible to make a pebbling move starting at a vertex from slash $k$.  It follows that the pebbling distributions which can be obtained on the slashes numbered less than $k$ do not depend on the pebbling distribution on slashes numbered greater than $k$ and vice versa.  If slash $k$ could be reached from a pebbling move from both slashes $k-1$ and $k+1$, then these moves could be performed independently and it would follow that slash $k$ is 2-reachable, a contradiction.
\end{proof}

If there is no possible pebbling move between slashes $k$ and $k+1$ then, after the deletion of the edges between them, each vertex of the resulting graph is still reachable under $P$. Hence $P$ induces a solvable distribution on each connected component. We will use this fact during later proofs.

\begin{figure}
\centering
\scalebox{\sfact}{\input{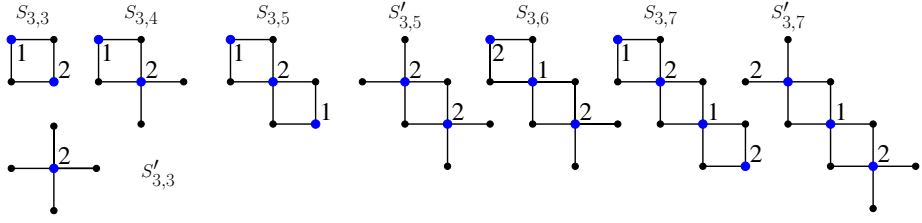}}

\caption{Optimal distributions of small $S_{3,n}$ and $S'_{3,n}$ graphs.}
\label{S3base}
\end{figure}

\begin{claim}
\label{lowerbound}
Let $G$ be $S_{m,n}$ or $S'_{m,n}$, and assume that there is a solvable distribution on $G$ with less than $n-1$ pebbles. Then, there is a $c$ such that $1\leq c <n$ and:
\begin{align*}
\pi_{\opt}(S_{m,n})\geq \pi_{\opt}(S_{m,c})+\pi_{\opt}(S_{m,n-c})& \text{ if } c \text{ is even}\\ 
\pi_{\opt}(S_{m,n})\geq \pi_{\opt}(S_{m,c})+\pi_{\opt}(S'_{m,n-c})& \text{ if } c \text{ is odd}\\
\pi_{\opt}(S'_{m,n})\geq \pi_{\opt}(S'_{m,c})+\pi_{\opt}(S'_{m,n-c})& \text{ if } c \text{ is even}\\ 
\pi_{\opt}(S'_{m,n})\geq \pi_{\opt}(S'_{m,c})+\pi_{\opt}(S_{m,n-c})& \text{ if } c \text{ is odd} .
\end{align*}
If $m$ is even, we have only one inequality:
\begin{equation*}
\pi_{\opt}(S_{m,n})\geq \pi_{\opt}(S_{m,c})+\pi_{\opt}(S_{m,n-c}).
\end{equation*}
\end{claim}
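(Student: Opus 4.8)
The plan is to take $P$ to be an \emph{optimal} distribution of $G$. By hypothesis some solvable distribution has size less than $n-1$, so $\pi_{\opt}(G)=|P|<n-1$, and in particular $P$ meets the hypotheses of Lemmas~\ref{not2reachable} and \ref{cancutlemma}. First I would apply Lemma~\ref{not2reachable} to obtain a slash, say slash $k$, that is neither the first nor the last slash and is not 2-reachable under $P$. Lemma~\ref{cancutlemma} then says that at least one of the two gaps adjacent to slash $k$ admits no pebbling move; I set $c:=k-1$ if the gap between slashes $k-1$ and $k$ admits no move, and $c:=k$ otherwise. Since $k$ is interior, $1\le c<n$ in both cases.

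Now I would cut $G$ along the chosen gap. Since every edge of the square grid changes the positive-diagonal index by exactly $\pm1$, deleting the edges between slash $c$ and slash $c+1$ disconnects $G$ into exactly two components $G_1$ (slashes $1,\dots,c$) and $G_2$ (slashes $c+1,\dots,n$). By the remark following Lemma~\ref{cancutlemma}, reaching any target never used a move across this gap, so the restrictions $P_1,P_2$ of $P$ to $G_1,G_2$ are each solvable on their component. Hence $\pi_{\opt}(G_1)\le|P_1|$ and $\pi_{\opt}(G_2)\le|P_2|$, and since $|P_1|+|P_2|=|P|=\pi_{\opt}(G)$, we obtain $\pi_{\opt}(G)\ge\pi_{\opt}(G_1)+\pi_{\opt}(G_2)$.

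It remains to identify $G_1$ and $G_2$ as standard staircase graphs, and this is where the parity of $c$ enters and where the real care is needed. For $m$ odd, computing the size of the slash on the positive diagonal $D^+_k$ shows it equals $(m-1)/2$ when $k$ is even and $(m+1)/2$ when $k$ is odd; thus the slash sizes alternate, and the classes $S_{m,\cdot}$ and $S'_{m,\cdot}$ are distinguished by whether the initial slash is short or long. The component $G_1$ keeps the initial diagonal of $G$, so it equals $S_{m,c}$ when $G=S_{m,n}$ and $S'_{m,c}$ when $G=S'_{m,n}$. The component $G_2$ begins at slash $c+1$; translating it into standard position, its phase matches that of $G$ exactly when $c$ is even and is reversed when $c$ is odd, which produces all four inequalities. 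For $m$ even the same computation gives every slash the common size $m/2$, so there is a single isomorphism class and both components are of type $S_{m,\cdot}$ regardless of the parity of $c$, giving the lone inequality. I expect the main obstacle to be precisely this last bookkeeping: verifying that the induced subgraph on slashes $c+1,\dots,n$ really is isomorphic to the claimed standard staircase, and that translating by $c$ toggles the $S/S'$ phase according to the parity of $c$. The cutting and reachability steps, by contrast, follow immediately from the preceding lemmas and the remark.
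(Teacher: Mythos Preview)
Your proposal is correct and follows essentially the same approach as the paper: take an optimal distribution, invoke Lemma~\ref{not2reachable} and Lemma~\ref{cancutlemma} to locate an interior slash with a blocked adjacent gap, set $c$ to $k$ or $k-1$ accordingly, cut there, and bound $|P|$ from below by the sum of the optimal pebbling numbers of the two pieces. Your treatment of the parity bookkeeping identifying $G_1$ and $G_2$ is in fact more explicit than the paper's, which simply asserts that ``the types of the two resulting graphs depend on the type of the original graph and the parity of~$c$.''
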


\begin{proof}
An optimal pebbling distribution $P$ has at most $n-1$ pebbles and so we can apply Lemma~\ref{not2reachable} and Lemma~\ref{cancutlemma}. Hence there is a slash $k$, which is neither the first nor the last, such that we cannot move a pebble between slashes $k$ and $k+1$ or $k$ and $k-1$. We choose $c$ to be $k$ in the first scenario and $k-1$ in the second. Then, we delete the edges between these slashes and obtain two smaller graphs with solvable distributions. The sum of the sizes of these two distributions is the same as $|P|$. The size of a solvable distribution is always at least as big as the optimal pebbling number. This gives us a lower bound.
The types of the two resulting graphs depend on the type of the original graph and the parity of $c$. 
\end{proof}

\begin{proof2}{Theorem \ref{theorem3stairs}}
The upper bound comes from solvable distributions shown in Figure \ref{Sopt}.

We prove the lower bounds for $S_{3,n}$ and $S'_{3,n}$ simultaneously by induction on $n$.
The cases where $n\leq 7$ are shown in Figure \ref{S3base}. It can be checked by hand or computer that these are optimal distributions.

Therefore, we will assume that $n\geq 8$ and the lower bound is true for values smaller than $n$. In this case we can apply Claim \ref{lowerbound}. 
Unfortunately, we do not know the value of $c$, hence we have to check all the possible values to prove a lower bound. The formulas which we want to prove and use as an inductive hypothesis are quite complicated and depend on the value of $n$ modulo 4. Therefore, we have several cases.  We will see that some of these cases are not possible if $P$ is optimal. 
  
First, consider $S_{3,n}$. We remind the reader that $n=4k+r$, where $r\in\{0,1,2,3\}$. 
We cut between the slashes $c$ and $c+1$, where $c=4l+q$ and $q\in \{0,1,2,3\}$. Similarly, assume that $n-c =4j+p$ where $p\in\{0,1,2,3\}$.  Note that $l+j$ equals $k$ when $q+p<4$ and $k-1$ otherwise. Furthermore, the parity of $c$ and $q$ are the same. When $q$ is even:
\begin{equation*}
\begin{split}
\pi_{\opt}(S_{3,n})&\geq\pi_{\opt}(S_{3,c})+\pi_{\opt}(S_{3,n-c})= 3l+q+3j+p=3(l+j)+q+p\\
&=\begin{cases}
3k+r & \text{if }q+p<4\\
3(k-1)+4+r & \text{if }q+p\geq 4.
\end{cases}
\end{split}
\end{equation*}
  Note that the second case in the inequality is not possible because it would give a bigger lower bound on $\pi_{\opt}(S_{3,n})$ than the known upper bound.
 
When $q$ is odd and $p\neq 3$, we obtain the same estimate on $\pi_{\opt}(S_{3,n})$ except in the case when $j=0$ and $p=1$ where the right-hand side is larger by 1 which is not possible for an optimal $P$.  
If $q$ is odd and $p=3$, then $p+q=4+r$, therefore:
\begin{equation*}
\pi_{\opt}(S_{3,n})\geq\pi_{\opt}(S_{3,c})+\pi_{\opt}(S'_{3,n-c}) = 3l+q+3j+p-1=3(l+j)+3+r=3k+r.
\end{equation*}


Now we consider $S'_{3,n}$. We may assume $n$ is odd because $S'_{3,n}\cong S_{3,n}$ when $n$ is even.
If $r=3$, then:
\begin{align*}
&\pi_{\opt}(S'_{3,4k+3})\geq\pi_{\opt}(S'_{3,4l})+\pi_{\opt}(S'_{3,4j+3})\geq 3l+3j+2=3k+2 & &\text{if } q=0\\
&\pi_{\opt}(S'_{3,4k+3})\geq\pi_{\opt}(S'_{3,4l+1})+\pi_{\opt}(S_{3,4j+2})\geq 3l+1+3j+2=3k+3 & &\text{if } q=1\\
&\pi_{\opt}(S'_{3,4k+3})\geq\pi_{\opt}(S'_{3,4l+2})+\pi_{\opt}(S'_{3,4j+1})\geq 3l+2+3j+1=3k+3 & &\text{if } q=2\\
&\pi_{\opt}(S'_{3,4k+3})\geq\pi_{\opt}(S'_{3,4l+3})+\pi_{\opt}(S_{3,4j})\geq 3l+2+3j=3k+2 & &\text{if } q=3.
\end{align*}
Finally, when $r=1$:
\begin{align*}
&\pi_{\opt}(S'_{3,4k+1})\geq\pi_{\opt}(S'_{3,4l})+\pi_{\opt}(S'_{3,4j+1})\geq 3l+3j+1=3k+1 & &\text{if } q=0\\
&\pi_{\opt}(S'_{3,4k+1})\geq\pi_{\opt}(S'_{3,4l+1})+\pi_{\opt}(S_{3,4j})\geq 3l+1+3j=3k+1 & &\text{if } q=1\\
&\pi_{\opt}(S'_{3,4k+1})\geq\pi_{\opt}(S'_{3,4l+2})+\pi_{\opt}(S'_{3,4j+3})\geq 3l+2+3j+2=
3k+1 & &\text{if } q=2\\
&\pi_{\opt}(S'_{3,4k+1})\geq\pi_{\opt}(S'_{3,4l+3})+\pi_{\opt}(S_{3,4j+2})\geq 3l+2+3j+2=
3k+1 & &\text{if } q=3.
\end{align*}

We have checked each case and the resulting lower bound is at least as strong as the desired one,  hence the proof is complete.
\end{proof2}

\subsection{Four-wide staircases}

In this case the proof is simpler because $S'_{4,n}$ is isomorphic to $S_{4,n}$. 

\begin{theorem}
\begin{equation*}
\pi_{\opt}(S_{4,4k+r})=3k+r
\end{equation*}
except in the following small cases:
$\pi_{\opt}(S_{4,1})=2, 
\pi_{\opt}(S_{4,2})=3.$
\end{theorem}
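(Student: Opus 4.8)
The plan is to prove the four-wide theorem by mirroring the three-wide argument, exploiting the crucial simplification that $S'_{4,n}\cong S_{4,n}$, so there is a single isomorphism class to track and Claim~\ref{lowerbound} collapses to the single inequality $\pi_{\opt}(S_{4,n})\geq \pi_{\opt}(S_{4,c})+\pi_{\opt}(S_{4,n-c})$. As before, the upper bound should come from explicit solvable distributions (analogous to those in Figure~\ref{Sopt}), placing roughly $3k+r$ pebbles so that every slash is reachable; I would verify reachability by checking that the placed pebbles can reach any vertex within their local block of slashes, just as in the width-seven motivation. The lower bound I would establish by induction on $n$, first checking the small base cases $n\leq 7$ (including the stated exceptions $\pi_{\opt}(S_{4,1})=2$ and $\pi_{\opt}(S_{4,2})=3$) by hand or computer, exactly as in Figure~\ref{S3base}.

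For the inductive step I would assume $n\geq 8$ and that a solvable distribution with fewer than $n-1$ pebbles exists (otherwise the bound $3k+r\leq n-1$ holds trivially for large $n$ and there is nothing to prove), then invoke Lemma~\ref{not2reachable} and Lemma~\ref{cancutlemma} to find an interior slash across which no pebbling move is possible, yielding a cut position $c$ with $1\leq c<n$. Writing $n=4k+r$, $c=4l+q$, and $n-c=4j+p$ with $q,p\in\{0,1,2,3\}$, I would note that $l+j=k$ when $q+p<4$ and $l+j=k-1$ otherwise. Applying the inductive hypothesis $\pi_{\opt}(S_{4,c})+\pi_{\opt}(S_{4,n-c})=(3l+q)+(3j+p)=3(l+j)+(q+p)$ gives $3k+r$ when $q+p<4$ and $3(k-1)+4+r=3k+r+1$ when $q+p\geq 4$. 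As in the three-wide proof, the second case produces a lower bound strictly exceeding the established upper bound, hence is impossible for an optimal distribution, so it can be discarded; the surviving case delivers exactly the desired $3k+r$.

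The main obstacle, and the place where the four-wide case genuinely differs from the three-wide case, is the handling of the small cases and the exceptional values $\pi_{\opt}(S_{4,1})=2$ and $\pi_{\opt}(S_{4,2})=3$. Because the additive formula $3l+q$ and $3j+p$ must hold for the pieces, I would need the base-case table to be correct and consistent so that the induction never splits into a piece whose value deviates from the generic formula in a way that breaks the bound; in the three-wide argument such deviations (the $r=3$ correction for $S'$, and the $j=0,p=1$ subtlety) required separate attention, and I expect the analogous bookkeeping here to hinge on ensuring that the small $S_{4,c}$ values feeding the recursion all obey $\pi_{\opt}(S_{4,4k+r})=3k+r$ with only the two listed exceptions. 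Concretely, I would confirm that whenever a cut forces $c\in\{1,2\}$, the slightly larger exceptional values $2$ and $3$ only strengthen the lower bound rather than weakening it, so they cause no problem. Once the base cases and this exceptional-value check are settled, the parity analysis is routine and, being free of the $S$ versus $S'$ dichotomy, is strictly simpler than in the three-wide setting.
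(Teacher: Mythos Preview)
Your proposal is correct and follows essentially the same approach as the paper: upper bound via explicit constructions, lower bound by induction on $n$ with base cases $n\le 7$, and for $n\ge 8$ an application of Claim~\ref{lowerbound} together with the $n=4k+r$, $c=4l+q$, $n-c=4j+p$ arithmetic. The paper is terser---it simply writes $\pi_{\opt}(S_{4,c})+\pi_{\opt}(S_{4,n-c})\ge 3l+q+3j+p\ge 3k+r$ without splitting into the $q+p<4$ and $q+p\ge 4$ cases or discussing the exceptional values at $c\in\{1,2\}$---but your more explicit treatment covers exactly the same ground, and your observation that the exceptions only strengthen the bound is the implicit justification behind the paper's compressed inequality.
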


\begin{figure}[H]
\centering
\scalebox{\sfact}{\input{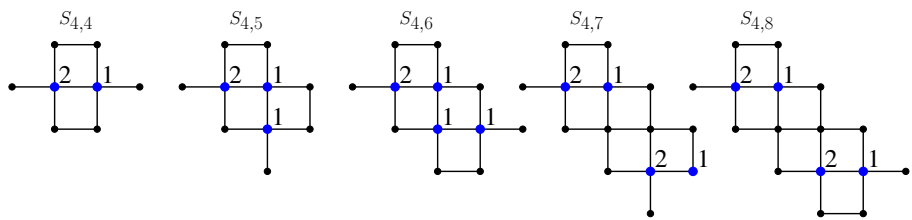}}

\caption{Optimal distributions of small $S_{4,n}$ graphs. For the $n\leq 3$ cases refer to the previous figures.}
\label{S4opt}
\end{figure}

\begin{proof}
The optimal distributions of small graphs are shown in Figure \ref{S4opt}. For larger $n$, we obtain a solvable distribution of $S_{4,n}$ in the following way: We partition $S_{4,n}$ into $k-1$ copies of $S_{4,4}$ and one copy of $S_{4,4+r}$ for some $r$, and we use the optimal distributions shown in Figure \ref{S4opt} in each of the parts. This gives the required upper bound.

To obtain the lower bound for $n\geq 8$, we assume that the statement is true for all values below $n$. We have $\pi_{\opt}(S_{4,n})\leq n-2$ by the upper bound so 
Claim \ref{lowerbound} can be applied yielding
\begin{equation*}
\pi_{\opt}(S_{4,n})\geq \pi_{\opt}(S_{4,c})+\pi_{opt}(S_{4,n-c}) \ge 3l+q+3j+p\geq 3k+r,
\end{equation*}
where $n=4k+r$, $c=4l+q$, $n-c=4j+p$ and $r,q,p\in\{0,1,2,3\}$. 
\end{proof}

\subsection{Five-wide staircases}

In this case we again must distinguish between the graphs $S_{5,n}$ and $S_{5,n}'$ when $n$ is odd. Fortunately, we are in a simpler situation than in the three-wide case because the formula optimal pebbling number of both these graphs turns out to be the same. The formula depends on the value of $n$ modulo 5, so in this section let $n=5k+r$ where $r\in\{0,1,2,3,4\}$.
\begin{theorem}

		
$$\pi_{\opt}(S_{5,5k+r})=\pi_{\opt}(S'_{5,5k+r})=4k+r,	$$			 
except for $n\in \{1,2,3,7\}$. $\pi_{\opt}(S_{5,3})=\pi_{\opt}(S'_{5,3})=4$ and $\pi_{\opt}(S'_{5,7})=7$. 

\end{theorem}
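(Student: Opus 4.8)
The plan is to mirror the structure of the three- and four-wide proofs, establishing matching upper and lower bounds by induction on $n$, and exploiting the fact that here $S_{5,n}$ and $S'_{5,n}$ share the same formula. For the \emph{upper bound}, I would exhibit an explicit solvable distribution. The natural periodicity is now $5$ (one pebble-efficient gadget per five slashes, costing roughly $4$ pebbles per period, matching $4k+r$), so I would partition the staircase into $k$ blocks each isomorphic to some $S_{5,5}$-type piece plus a remainder block of length $r$, place an optimal distribution of size $4$ on each full block and a small distribution on the remainder, and verify solvability of each gadget by hand (these are the pictures one would include in an analogue of Figure~\ref{S4opt}). One must check that the boundary slashes of adjacent blocks do not create reachability gaps; since the cutting argument of Lemma~\ref{cancutlemma} shows reachability is essentially local, solvability of each block separately suffices.

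For the \emph{lower bound} I would again induct on $n$, handling the small cases $n\le $ (some threshold, large enough that the induction closes) by direct inspection or computer check, and in particular recording the stated exceptions $n\in\{1,2,3,7\}$. For $n$ above the threshold I would invoke Claim~\ref{lowerbound}: an optimal distribution has size at most $n-1$ (guaranteed once the upper bound $4k+r$ is below $n-1$, which holds for large $n$ since $4k+r \le \tfrac{4n}{5}+\text{const}$), so there is an interior non-$2$-reachable slash at which we may cut. Writing $n=5k+r$, $c=5l+q$, $n-c=5j+p$ with $q,p\in\{0,\dots,4\}$, I would verify that every splitting type listed in Claim~\ref{lowerbound} gives
\[
\pi_{\opt}(S_{5,c})+\pi_{\opt}(S_{5,n-c}) \;\ge\; 4(l+j)+q+p \;\ge\; 4k+r,
\]
using $l+j=k$ when $q+p<5$ and $l+j=k-1$ when $q+p\ge 5$ (so that the deficit $4$ in the $k$-term is compensated by $q+p\ge 5$). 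Because the formula is \emph{identical} for the primed and unprimed graphs, the parity bookkeeping that complicated the three-wide case collapses: every one of the four cut-type inequalities reduces to the same arithmetic $4(l+j)+q+p\ge 4k+r$, and I need not track which of $S$ or $S'$ each piece is.

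The main obstacle I anticipate is twofold. First, on the upper-bound side, designing a width-five gadget that is genuinely $4$-per-period solvable is more delicate than in the width-four case, since a slash in a five-wide staircase has up to five vertices and reaching the extreme corners of a slash from a pebble placed in an adjacent block may require carefully routed moves; verifying solvability of the boundary cases (and pinning down exactly which small $n$ are exceptional) is where the real work lies. Second, on the lower-bound side, the induction only closes once the base cases extend far enough that every admissible pair $(c,n-c)$ with $c,n-c<n$ already satisfies the theorem, including the primed/unprimed small values; the exceptional value $n=7$ (where $\pi_{\opt}(S'_{5,7})=7$ deviates from $4k+r=4\cdot1+2=6$) signals that the base-case analysis must be done with care, and one must confirm that this anomaly does not propagate through the inductive cuts for larger $n$. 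Concretely, I would check that whenever a cut produces a piece of the exceptional length $7$, the alternative cut positions (or the fact that the exception only raises the value) keep the bound $\ge 4k+r$ intact.
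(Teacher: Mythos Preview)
Your proposal is correct and follows essentially the same approach as the paper: an explicit construction for the upper bound (the paper concatenates optimal distributions of $S_{5,5}$, $S'_{5,5}$ and a handful of small end-pieces, handling base cases up to $n=9$ by picture), and for the lower bound an induction via Claim~\ref{lowerbound}, valid once $n\ge 10$ since then $4k+r\le n-2$, yielding exactly your arithmetic $4l+q+4j+p\ge 4k+r$. The paper makes your observation about the collapse of the primed/unprimed bookkeeping explicit by introducing the notation $S^*_{5,n}$ to stand for either graph; and your concern about the exceptional small pieces is harmless for the reason you yourself give---every listed exception has optimal pebbling number \emph{above} the formula value, so it can only strengthen the inductive lower bound.
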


\begin{proof}
The smaller cases can be seen in Figure \ref{S5opt}. Solvable distributions with the given sizes can be constructed with the proper concatenation of the optimal distributions of $S_{5,4}$, $S_{5,5}$, $S_{5,6}$, $S_{5,7}$, $S_{5,8}$, $S_{5,9}$, $S'_{5,5}$ and $S'_{5,9}$. For large $n$ use many copies of $S_{5,5}$ and $S'_{5,5}$ in the middle and suitably extend it on the ends with the other distributions. 

We can still apply Claim \ref{lowerbound} when $n\geq 10$. The formula is the same for $S_{5,n}$ and $S'_{5,n}$, therefore we introduce the notation $S^*_{5,n}$ to denote either $S_{5,n}$ or $S'_{5,n}$. We can formalize the statement of Claim~\ref{lowerbound} in one inequality:
\begin{equation*}
\pi_{\opt}(S^*_{5,n})\geq \pi_{\opt}(S^*_{5,c})+\pi_{\opt}(S^*_{5,n-c})=4l+q+4j+p\geq 4k+r,
\end{equation*}	
where $n=5k+r$, $c=5l+q$, $n-c=5j+p$ and $r,q,p\in\{0,1,2,3,4\}$. 
\end{proof}

\begin{figure}
\centering
\scalebox{\sfact}{\input{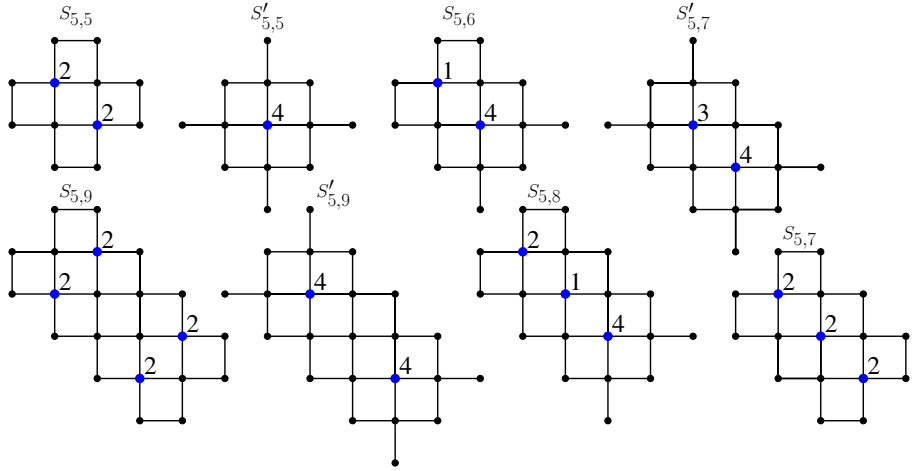}} 

\caption{Optimal distributions of small $S_{5,n}$ graphs. For the cases where $n\leq 4$ refer to the previous figures.}
\label{S5opt}
\end{figure}

\subsection{Six-wide staircases}

To handle this case and the seven-wide case we need some additional tools. We start by recalling some known results about the path.
\begin{lemma}[\cite{ladder}]
A 2-optimal distribution of $P_n$ consists of  \emph{prime segments} separated by single unoccupied vertices, where a prime segment is a subpath of one of two possible types. Either all but one of the vertices have one pebble and one vertex has two pebbles or there are three consecutive vertices with zero, four and zero pebbles, in that order, and the remaining vertices having one pebble.  
\label{primesegments}
\end{lemma}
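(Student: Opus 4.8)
The plan is to trade the elaborate-looking statement for a short list of explicit inequalities, verify the two prime shapes by direct computation, and then prove the converse by peeling prime segments off one end. Throughout, write the vertices as $v_1,\dots,v_n$.

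First I would record a workable description of $2$-reachability on a path. Set $L_0=R_{n+1}=0$ and define the left and right \emph{accumulation functions} by $L_i=P(v_i)+\floor{L_{i-1}/2}$ and $R_i=P(v_i)+\floor{R_{i+1}/2}$. A standard induction shows that $\floor{L_{i-1}/2}$ is exactly the largest number of pebbles deliverable to $v_i$ using only the pebbles on $v_1,\dots,v_{i-1}$ (any such pebble must pass through $v_{i-1}$, and throughput is maximised by first concentrating at $v_{i-1}$), and symmetrically for $R$. Since the two sides use disjoint pebbles and their moves never interfere on a path, $v_i$ is $2$-reachable if and only if $P(v_i)+\floor{L_{i-1}/2}+\floor{R_{i+1}/2}\ge 2$. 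Thus $P$ is $2$-solvable exactly when these $n$ inequalities hold, and the whole problem becomes bookkeeping on $L$ and $R$.

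For the easy direction I would compute $L$ and $R$ on each prime shape. On a type-one segment the values run $1,\dots,1,2,\dots,2$ from each side; on a type-two segment they run $1,\dots,1,0,4,2,\dots,2$. In both cases every interior inequality is met with equality, each endpoint accumulates exactly $2$, and hence exactly one pebble can be pushed past each end. Consequently a concatenation of prime segments glued by single unoccupied separators is $2$-solvable, since each separator receives one pebble from each neighbouring segment. Counting pebbles, a segment of length $\ell$ carries $\ell+1$ pebbles, so $s$ segments on $n$ vertices (with $s-1$ separators) carry $n+1$ pebbles, in agreement with Theorem~\ref{2optimal}.

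The substance is the converse. Let $P$ be $2$-optimal, so $\abs{P}=n+1$. I would induct on $n$, peeling the leftmost block. Using the criterion, locate the first \emph{separator}: the least index $s$ with $v_s$ unoccupied, $L_s=1$ and $L_{s-1}\ge 2$ (informally, the first place where the leftward supply has just been spent down to a single deliverable pebble). One then argues that the prefix $v_1,\dots,v_{s-1}$ is \emph{separator-free}, is self-sufficient, and exports exactly one pebble to the right, so it carries exactly $s$ pebbles; that with only $s$ pebbles on $s-1$ vertices the configurations satisfying all of these constraints are precisely the two prime shapes; and that, by minimality, no pebbling move can cross $v_s$, so the suffix $v_{s+1},\dots,v_n$ inherits a $2$-solvable distribution that is again optimal on $P_{n-s}$. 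The induction then closes, with the outermost blocks handled identically except that they need not export (self-sufficiency alone still pins down their shape).

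The main obstacle is exactly this last step, and it splits in two. The first difficulty is the \emph{decoupling} claim, that in a minimum-size distribution no vertex ever needs to borrow a pebble across a separator; this is the path analogue of Lemma~\ref{cancutlemma} and is what legitimises the inductive cut. The second and more delicate difficulty is the extremal classification: a separator-free block carrying only $\ell+1$ pebbles that is internally $2$-solvable and exports one pebble from each end must be either a single doubled vertex among ones or the $0,4,0$ gadget among ones. Configurations such as $2,0,2$ show why the separator detection must run \emph{first} (it splits such a string into two blocks), and ruling out every remaining shape under the tight pebble budget is where essentially all of the case analysis lives.
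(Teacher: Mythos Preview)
The paper does not prove this lemma at all: it is quoted verbatim from \cite{ladder} (Bunde, Chambers, Cranston, Milans, West) and used as a black box, so there is no in-paper argument to compare your proposal against.

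As a standalone attempt, what you have written is a reasonable \emph{strategy} but not yet a proof. The accumulation functions $L_i,R_i$ and the $2$-reachability criterion $P(v_i)+\floor{L_{i-1}/2}+\floor{R_{i+1}/2}\ge 2$ are correct and standard, and the forward direction (prime concatenations are $2$-solvable with $n+1$ pebbles) is fine. For the converse you correctly identify the two real tasks---showing that a separator exists and that cutting there leaves two independent $2$-optimal pieces, and classifying the separator-free blocks on a tight budget---but you do not carry either out. In particular, your separator definition (least $s$ with $P(v_s)=0$, $L_s=1$, $L_{s-1}\ge 2$) needs an existence argument when the whole path is not already a single prime segment, and the ``extremal classification'' paragraph promises a case analysis that is never performed. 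The original proof in \cite{ladder} does exactly this kind of block-by-block analysis; if you want a self-contained argument you will have to fill in those two steps rather than flag them as obstacles.
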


A corollary of this lemma is that under a 2-optimal distribution no vertex is 5-reachable, and if a vertex is 3-reachable, then it has to contain 4 pebbles. The neighbours of a vertex $v$ having 4 pebbles can get pebbles only from $v$. Similarly, each of the neighbours of a vertex $u$ containing exactly 2 pebbles can get only one pebble and only from $u$. Using these facts we obtain the following statement for staircases.  

\begin{lemma}
\label{uniquecor}
Let $G$ be a staircase graph with $n$ slashes. If $P$ is a pebbling distribution on $G$ with $n+1$ pebbles, such that each slash is 2-reachable under $P$, then:
\begin{itemize}
\item Each slash contains at most 4 pebbles.
\item If a slash is 3-reachable, then it contains 4 pebbles.
\item if a slash has at least one pebble, then at most one pebble can be moved to that slash with a pebbling move.
\item If a slash contains 4 pebbles, then the adjacent slashes have no pebbles, furthermore they can get pebbles only from this slash.
\item If a slash contains 2 pebbles, then a pebbling move can only be performed ending in that slash if a pebbling move was first performed beginning on that slash.
\end{itemize}
\end{lemma}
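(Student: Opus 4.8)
The plan is to push the entire problem down to the path $P_n$ by reusing the collapsing map $\phi$ from the proof of Lemma~\ref{existsnot2reachable}, which sends each slash to a vertex of $P_n$ and consecutive slashes to adjacent vertices. The first thing I would record is a structural observation that makes the reduction clean: no two vertices of a single slash are adjacent in $G$, since both lie on one positive diagonal $D^+_k$ while every grid edge joins $D^+_k$ to $D^+_{k\pm 1}$. Hence each slash is an independent set, every pebbling move of $G$ runs between two consecutive slashes, and therefore every move of $G$ projects under $\phi$ to a genuine pebbling move of $P_n$ for the collapsed distribution $P_\phi$. Throughout any sequence of moves, the number of pebbles on a slash $S$ equals the number on the vertex $\phi(S)$, and additions to (respectively removals from) $S$ correspond exactly to additions to (respectively removals from) $\phi(S)$.

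Next I would verify that $P_\phi$ is $2$-optimal on $P_n$. Collapsing preserves the total count, so $\abs{P_\phi}=\abs{P}=n+1$; since each slash is $2$-reachable under $P$, each vertex of $P_n$ is $2$-reachable under $P_\phi$ (reachability is preserved by $\phi$, as already argued for Lemma~\ref{existsnot2reachable}), so $P_\phi$ is $2$-solvable; and by Theorem~\ref{2optimal} a $2$-solvable distribution of $P_n$ needs at least $n+1$ pebbles, so $P_\phi$ is $2$-optimal. I may then apply Lemma~\ref{primesegments} and the corollary following it to $P_\phi$. The two static bullets are then immediate: the largest value appearing in any prime segment is $4$, which gives the first bullet, while the corollary's implication ``$3$-reachable $\Rightarrow$ contains $4$ pebbles'' combined with reachability preservation gives the second (if $S$ is $3$-reachable in $G$ then $\phi(S)$ is $3$-reachable under $P_\phi$, so $P_\phi(\phi(S))=4$, i.e.\ $S$ carries $4$ pebbles).

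The three dynamic bullets I would obtain by the same projection principle, arguing by contradiction so that a forbidden move pattern on $P_n$ rules out the corresponding pattern on $G$. For the fourth bullet, a slash with $4$ pebbles maps to a vertex $v$ with $P_\phi(v)=4$, which by Lemma~\ref{primesegments} sits in a segment of the form $0,4,0$; thus both neighbours of $v$ are empty and, by the corollary, can receive pebbles only from $v$. Projecting back, the adjacent slashes carry no pebbles, and any move placing a pebble on an adjacent slash from its \emph{far} neighbour would project to a forbidden move onto a neighbour of $v$, so those slashes can be fed only from $S$. For the fifth bullet, a $2$-pebble slash maps to a vertex $v$ with $P_\phi(v)=2$; by the corollary each neighbour of $v$ gets at most one pebble and only from $v$, so amassing the two pebbles on a neighbour that are needed to move onto $v$ forces a prior move out of $v$, and projecting this shows that a move ending on $S$ must be preceded by a move beginning on $S$. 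For the third bullet, note that prime segments give each slash value $0,1,2$ or $4$; if a slash with value in $\{1,2,4\}$ could absorb two pebbles, then $\phi(S)$ would reach $3,4$ or $6$ pebbles respectively, making it $3$-reachable while holding fewer than four pebbles in the first two cases and $5$-reachable in the last, each excluded by the corollary.

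The step I expect to demand the most care is precisely this transfer of the dynamic statements: unlike pebble counts, assertions of the form ``no second pebble can ever be moved onto $S$'' quantify over all move sequences, so I must be certain that the projection of a $G$-sequence is a legitimate $P_n$-sequence to which the corollary applies, and that ``moved onto'' is counted consistently on both sides. The independence of each slash is exactly what makes this work, since it guarantees there are no intra-slash moves that fail to project; once that is established, every impossibility extracted from the prime-segment corollary on $P_n$ pulls back to $G$ verbatim.
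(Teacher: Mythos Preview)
Your proposal is correct and follows exactly the same route as the paper: collapse $G$ to $P_n$ via the map $\phi$ from Lemma~\ref{existsnot2reachable}, observe that $P_\phi$ is a $2$-optimal distribution on $P_n$, and read off each bullet from Lemma~\ref{primesegments} and the prime-segment facts stated just before Lemma~\ref{uniquecor}. The paper compresses all of this into three sentences, whereas you have supplied the details it leaves implicit---in particular, the independence of each slash (so that $G$-moves project to genuine $P_n$-moves) and the case-by-case verification of the dynamic bullets.
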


\begin{proof}
We simply apply the collapsing function defined in the proof of Lemma \ref{existsnot2reachable}.  The collapsed distribution on $P_n$ is 2-optimal so Lemma \ref{primesegments} applies.  All of the statements then follow from easily observable facts about prime segments.
\end{proof}

\begin{theorem}
\label{6widetheorem}
$$\pi_{\opt}(S_{6,n})=n,$$			
		 
except for $n\in \{1,2,3,4,8,9\}$.
$\pi_{\opt}(S_{6,3})=\pi_{\opt}(S_{6,4})=5$,
$\pi_{\opt}(S_{6,8})=9$ and
$\pi_{\opt}(S_{6,9})=10$.
\end{theorem}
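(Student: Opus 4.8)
The plan is to establish the matching upper and lower bounds, proving the lower bound by strong induction on $n$ with the base cases $n\le 9$ verified directly (by hand or computer, as in the earlier subsections). Since $m=6$ is even there is a single isomorphism class, so Claim~\ref{lowerbound} reduces to the single inequality $\pi_{\opt}(S_{6,n})\ge \pi_{\opt}(S_{6,c})+\pi_{\opt}(S_{6,n-c})$, which streamlines the induction considerably. For the upper bound I would, for each non-exceptional $n$, partition the slashes into consecutive groups of sizes in $\{5,6,7\}$, each an induced sub-staircase $S_{6,5},S_{6,6},S_{6,7}$ carrying its optimal distribution of one pebble per slash; since every target lies in a single block and is reachable using only that block's pebbles, the union is solvable and has total size exactly $n$. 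The fact that $\{5,6,7\}$ tiles every length $\ge 10$ (and $5,6,7$ themselves) but not $8$ or $9$ is exactly why those two lengths are exceptional and cost one extra pebble.

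For the lower bound, suppose for contradiction that $P$ is solvable on $S_{6,n}$ with $|P|\le n-1$, and that the theorem holds for all smaller lengths; in particular the inductive hypothesis gives $\pi_{\opt}(S_{6,j})\ge j$ for every $j<n$, since even the exceptional values $5,5,9,10$ exceed their indices. If $|P|\le n-2$, then $|P|<n-1$, so Lemma~\ref{not2reachable} and Lemma~\ref{cancutlemma} produce an interior slash across which no pebble can move; cutting there and applying Claim~\ref{lowerbound} with the inductive hypothesis gives $|P|\ge \pi_{\opt}(S_{6,c})+\pi_{\opt}(S_{6,n-c})\ge c+(n-c)=n$, contradicting $|P|\le n-2$. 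The whole difficulty therefore concentrates in the single remaining case $|P|=n-1$, where cutting is powerless because the optimal size is itself $n$.

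To treat $|P|=n-1$, I would first apply Lemma~\ref{existsnot2reachable} (valid since $|P|<n+1$) to obtain a slash that is not $2$-reachable. Were it interior, Lemma~\ref{cancutlemma} would again force $|P|\ge n$; hence every interior slash is $2$-reachable and some endpoint slash, say the first, is not. A non-$2$-reachable first slash can never hold two pebbles, so it can never initiate a move and its at most one initial pebble is frozen; consequently the pebbles on slashes $2,\dots,n$ already reach every target there, so $P$ restricted to $G'\cong S_{6,n-1}$ is solvable and, by the inductive hypothesis, $|P'|\ge n-1$. As $|P'|\le|P|=n-1$, equality forces the first slash to be empty and $P'$ to be an \emph{optimal} distribution of $S_{6,n-1}$. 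Now the empty first slash consists of three mutually non-adjacent vertices that must each be reached from slash $2$; its degree-one corner has a unique neighbour in slash $2$, so reaching that corner forces two pebbles onto that neighbour, i.e.\ it must be $2$-reachable under $P'$, and reaching the other two corners forces additional $2$-reachabilities inside slash $2$. I would then add one pebble on each non-$2$-reachable endpoint to obtain a distribution of size $n+1$ in which every slash is $2$-reachable, invoke Lemma~\ref{uniquecor} together with the prime-segment description of Lemma~\ref{primesegments} to pin down the placement of the $2$- and $4$-pebble piles near the boundary, and show that this rigid local structure cannot simultaneously keep the endpoint corner $2$-reachable from the interior while the total stays at $n-1$ — the contradiction that eliminates $|P|=n-1$.

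The main obstacle is precisely this boundary analysis: because $\pi_{\opt}(S_{6,n})=n$ the cutting argument never closes the last unit of slack, and one must prove directly that a tight, solvable sub-distribution on $S_{6,n-1}$ cannot also deliver a pebble to each of the three independent vertices of an empty endpoint slash. This is where the fine control of Lemma~\ref{uniquecor} over how few pebbles can reach a given slash, and over the positions of the $2$- and $4$-piles, becomes essential, and it must be combined with a careful separate treatment of the short exceptional lengths $n\in\{4,5,9,10\}$, whose peeled-off factor $S_{6,n-1}$ is itself exceptional and so must be handled before the generic argument applies.
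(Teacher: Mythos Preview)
Your plan is essentially the paper's: the upper bound by tiling with copies of $S_{6,5},S_{6,6},S_{6,7}$ is exactly what the paper does, and for the lower bound the paper also reduces everything to the case where every inner slash is $2$-reachable and then carries out a boundary analysis via Lemma~\ref{uniquecor}. The difference is purely organizational. The paper extracts that boundary analysis as a standalone statement, Lemma~\ref{lemmacont}: \emph{no solvable distribution on $S_{6,n}$ of size at most $n$ can have every inner slash $2$-reachable}. Once that lemma is in hand, the main proof is two lines: an optimal $P$ has $|P|\le n$, so by Lemma~\ref{lemmacont} some inner slash is not $2$-reachable, whence Lemma~\ref{cancutlemma} and Claim~\ref{lowerbound} give $|P|\ge c+(n-c)=n$. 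In particular the paper never splits into the subcases $|P|\le n-2$ versus $|P|=n-1$; the lemma handles $|P|\le n$ uniformly.

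Your peeling manoeuvre (restrict to $G'\cong S_{6,n-1}$ and use that $P'$ must be optimal there) is a legitimate variant, but it adds bookkeeping without avoiding the real work: you still end up having to add pebbles to reach a $2$-optimal collapsed distribution and then invoke Lemma~\ref{uniquecor} to argue vertex by vertex that the three vertices of the empty first slash cannot all be reached. The paper does this directly on $G$: it observes the non-$2$-reachable first slash must be empty, adds one pebble there (and one on the last slash if $|P|=n-1$) to obtain $P'$ of size exactly $n+1$, so $P'_\phi$ is $2$-optimal on $P_n$, and then uses Lemma~\ref{uniquecor} to show that slashes $2$ and $3$ cannot carry the right pattern of pebbles to reach both corner vertices $u_1$ and $u_3$ of the first slash. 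Your sketch points at exactly this argument but does not carry it out; the paper's proof of Lemma~\ref{lemmacont} is the missing computation.
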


To prove the lower bound we cannot use exactly the same method which we used for the narrower staircases because Lemma \ref{not2reachable} does not apply when $|P|\geq n-1$. We can overcome this difficulty with the next lemma.

\begin{figure}
\centering
\scalebox{\sfact}{\input{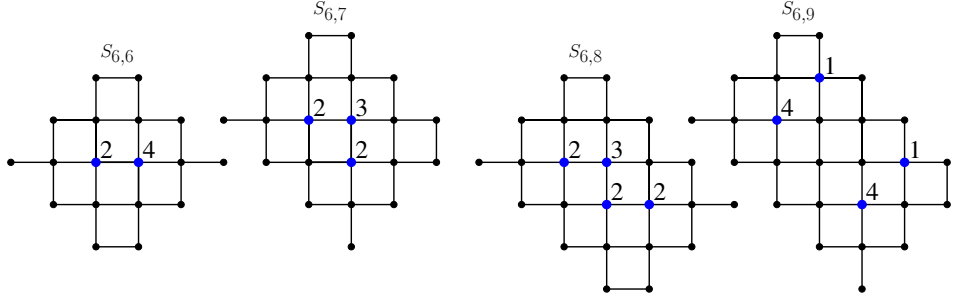}}

\caption{Optimal distributions of small $S_{6,n}$ graphs. For $n\leq 5$ cases check the previous figures.}
\label{S6base}
\end{figure}

\begin{lemma}
There is no solvable distribution $P$ on $S_{6,n}$ with size at most $n$, such that each inner slash is 2-reachable.
\label{lemmacont}
\end{lemma}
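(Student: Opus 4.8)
The goal is to show that no solvable distribution $P$ on $S_{6,n}$ with $|P|\le n$ can make every inner slash $2$-reachable. The natural strategy is to argue by contradiction: assume such a $P$ exists. First I would dispose of the case $|P|\le n-1$ quickly. If $|P| < n+1$ then by Lemma~\ref{existsnot2reachable} some slash fails to be $2$-reachable; since we are assuming every inner slash is $2$-reachable, the non-$2$-reachable slash must be the first or last slash. So the only interesting regime is $|P|=n$, where every slash including the boundary ones might be $2$-reachable, and one must extract a contradiction from the fine structure of the distribution rather than a mere counting bound.

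\textbf{Exploiting the collapsed path structure.}
The key tool is Lemma~\ref{uniquecor}. If $|P|=n$ and every slash is $2$-reachable, then adding a single pebble anywhere produces a distribution of size $n+1$ in which every slash is still $2$-reachable, so the hypotheses of Lemma~\ref{uniquecor} are nearly met; alternatively, I would collapse $P$ directly via the map $\phi$ to $P_n$ and invoke Theorem~\ref{2optimal} together with Lemma~\ref{primesegments} to pin down the prime-segment structure of $P_\phi$. The point is that a $2$-solvable distribution on $P_n$ needs $n+1$ pebbles, so with only $n$ pebbles the collapsed distribution is one pebble short of $2$-solvable, meaning exactly one vertex of the collapsed path is not $2$-reachable, or equivalently the prime segments almost tile the path. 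This rigid structure forces each slash to carry a very specific number of pebbles (mostly one pebble per slash, with isolated slashes carrying two or four and accompanying gaps), and crucially restricts how pebbles can flow between adjacent slashes.

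\textbf{The width-six obstruction.}
The heart of the argument — and the main obstacle — is that $2$-reachability of a \emph{slash} in the collapsed path does not immediately guarantee $2$-reachability of \emph{every vertex} of that slash in the original graph $S_{6,n}$. A slash in a six-wide staircase can contain several vertices, and the collapsing map forgets the internal geometry. So the real work is to show that the rigid one-pebble-per-slash structure inherited from Lemma~\ref{primesegments}, which is just barely enough to $1$-reach each slash as a set, is \emph{not} enough to simultaneously solve all the actual vertices inside the wide slashes while also keeping all inner slashes $2$-reachable. Concretely, I would examine a slash carrying a single pebble that must serve its own vertices and contribute to $2$-reaching its neighbors: Lemma~\ref{uniquecor} tells us at most one pebble enters a nonempty slash, and a $2$-pebble slash can only receive after first sending, so the budget of incoming pebbles to a wide slash is too small to cover all its vertices. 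I expect to reach a contradiction by locating a specific inner slash whose internal vertices cannot all be reached under the forced distribution, or equivalently showing that the unavoidable gaps in the prime-segment structure land in positions where the width-six geometry prevents a solution.

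\textbf{Expected difficulty and completion.}
The hard part will be the bookkeeping that connects the one-dimensional prime-segment description on $P_n$ back to the two-dimensional reachability inside individual slashes of width six; this is exactly where the jump from narrow to wide staircases bites, since for widths three through five the slashes were small enough that set-$2$-reachability essentially coincided with vertex reachability. I would organize this by fixing attention on a single prime segment and a single wide inner slash within it, counting the maximum number of pebbles that can be delivered to the vertices of that slash given the constraints of Lemma~\ref{uniquecor}, and comparing against the number of vertices that must be covered. Once the local contradiction is established for the forced structure, the lemma follows, and it then serves as the replacement for Lemma~\ref{not2reachable} in the regime $|P|\ge n-1$ needed to run the cutting induction for Theorem~\ref{6widetheorem}.
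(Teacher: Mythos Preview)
Your general framework is right---assume such a $P$ exists, pad to size $n+1$, collapse to $P_n$, and invoke Lemma~\ref{uniquecor}---but the plan has a real gap in where it looks for the contradiction. You write that in the regime $|P|=n$ ``every slash including the boundary ones might be $2$-reachable,'' yet two sentences earlier you correctly deduced from Lemma~\ref{existsnot2reachable} that some boundary slash must fail to be $2$-reachable whenever $|P|<n+1$. That deduction applies equally to $|P|=n$; there is no regime in which all slashes are $2$-reachable. The paper's proof leans on exactly this: without loss of generality the \emph{first} slash is not $2$-reachable, and one then argues it carries no pebble under $P$ (otherwise reaching its other vertices would make it $2$-reachable). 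Padding by one pebble on the first slash (and one on the last if $|P|=n-1$) yields $P'$ of size $n+1$, and the analysis via Lemma~\ref{uniquecor} is now anchored at the boundary.

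The actual obstruction is not, as you anticipate, at a generic wide inner slash but at the corner geometry of the first two or three slashes. The first slash has two degree-one vertices $u_1,u_3$ whose unique neighbours lie in the second slash; reaching $u_3$ forces $v_3$ to be $2$-reachable, and reaching $u_1$ forces $v_1$ or $v_2$ to be $2$-reachable. One then rules out the second slash having $2$, $1$, or $0$ pebbles in turn: with $2$ or $1$ pebble Lemma~\ref{uniquecor} prevents two distinct vertices of the second slash from being $2$-reachable, and with $0$ pebbles the third slash must hold exactly $4$ pebbles (by Lemma~\ref{uniquecor} again), but four pebbles on a single vertex of the third slash cannot simultaneously reach both $u_1$ and $u_3$. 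Your plan to hunt for an unreachable vertex in an \emph{inner} slash would not find this, because the prime-segment structure on a long interior stretch is in fact compatible with solvability there; the width-six failure is a boundary phenomenon.
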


\begin{figure}
\scalebox{\sfact}{\input{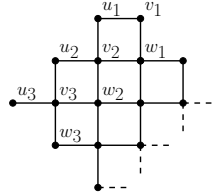}}
\centering
\caption{Labelling of the vertices}
\label{beginslash6}
\end{figure}

\begin{proof}
Indirectly assume that such a $P$ exists. The size of $P$ can be either $n-1$ or $n$, otherwise an inner slash must be not 2-reachable according to Lemma \ref{not2reachable}. Without the loss of generality, we assume that the first slash is not 2-reachable. 

Since $P$ is solvable, we can reach all vertices of the first slash. If the first slash has exactly one pebble, then we need to move another pebble to reach its unoccupied vertices. This means that it is 2-reachable, therefore the first slash cannot have a pebble. 

Put a pebble on the first slash and, in the case of $|P|=n-1$,  another one at the last slash. This results a new distribution $P'$ and each slash is 2-reachable under it. Therefore $|P'| = n+1$ and, if we collapse the graph into $P_n$ with $\phi$, then the collapsed distribution $P'_{\phi}$ is a 2-optimal distribution. 

We will require some additional notation. Vertex names are shown in Figure \ref{beginslash6}. Now we exploit Lemma \ref{uniquecor}.

The first slash contains exactly one pebble under $P'$. By Lemma \ref{uniquecor} (points 2 and 4) the second slash cannot be 3-reachable under $P'$. So it has at most two pebbles.
 $v_3$ has to be 2-reachable under $P$ and $P'$ because $u_3$ can be reachable only from this vertex. The reachability of $u_1$ requires that $v_1$ or $v_2$ is also 2-reachable.

If the second slash has two pebbles, then it is clear from the last statement of Lemma \ref{uniquecor} that at most one vertex in that slash is 2-reachable.   This contradicts that $v_3$ and one of $v_1$ or $v_2$ are 2-reachable.


When the second slash has exactly one pebble, then the third statement of Lemma \ref{uniquecor} implies at most one vertex is $2$-reachable in the second slash, which is not enough.

So neither the first nor the second slash contain a pebble under $P$, but the reachability of the first slash requires that the third slash is 4-reachable under $P'$.  According to Lemma \ref{uniquecor} the third slash must contain exactly 4 pebbles.  Every vertex in the first three slashes must be reachable using only the 4 pebbles on slash three, but this is impossible (in particular we cannot reach both $u_3$ and $v_1$).
\end{proof}

\newpage

\begin{proof2}{Theorem \ref{6widetheorem}}
For small optimal distributions see Figure \ref{S6base}.  Solvable distribution with  $n$ pebbles in cases when $n\geq 10$ can be created by combining the optimal distributions of $S_{6,5}$, $S_{6,6}$ and $S_{6,7}$. 


Assume that the lower bound is proved for each integer less than $n$. 
Let $P$ be an optimal distribution. We know that the size of $P$ is at most $n$. By Lemma \ref{lemmacont} we have that some inner slash is not 2-reachable. Then we can apply 
Claim \ref{lowerbound} and the induction hypothesis, which gives that $|P|\geq c+n-c=n$. This completes the proof.
\end{proof2}
\subsection{Seven-wide staircase graphs}

\begin{lemma}
There is no solvable distribution $P$ on $S_{7,n}$ with size at most $n$, such that each inner slash is 2-reachable.
\label{lemmacont7}
\end{lemma}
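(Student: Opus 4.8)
The plan is to mirror the proof of Lemma~\ref{lemmacont}, adapting only the endgame to the wider slashes that occur when $m=7$. Suppose for contradiction that such a $P$ exists. Since every inner slash is 2-reachable, Lemma~\ref{not2reachable} rules out $|P|<n-1$, so $|P|\in\{n-1,n\}$; and since $|P|<n+1$, Lemma~\ref{existsnot2reachable} yields a slash that is not 2-reachable, necessarily an end slash, which by the reflection symmetry of $S_{7,n}$ I would call the first slash. Exactly as before, the first slash must be empty: a single pebble there would have to be supplemented in order to cover its unoccupied vertices (making the slash 2-reachable), while two or more pebbles would make it 2-reachable outright. I then form $P'$ by adding one pebble to the first slash (and, if $|P|=n-1$, one more to the last slash), so that $|P'|=n+1$ and every slash is 2-reachable. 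Collapsing to $P_n$ gives a 2-optimal distribution, so Lemma~\ref{uniquecor} is available.

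Next I would record the local adjacencies. In $S_{7,n}$ the slashes alternate in size between three and four vertices, so the non-2-reachable end slash has three or four vertices; I write its vertices as $a_1,\dots$ and those of the adjacent second slash as $b_1,\dots$. The decisive combinatorial input is that the two \emph{extreme} vertices of the first slash have \emph{disjoint} neighborhoods inside the second slash; since the first slash is empty under $P$, covering both extreme vertices therefore forces two \emph{distinct} vertices of the second slash to be 2-reachable. Because the first slash holds exactly one pebble under $P'$, points 2 and 4 of Lemma~\ref{uniquecor} show the second slash is not 3-reachable, hence carries at most two pebbles.

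I would then split into cases on the number of pebbles on the second slash. If it holds two pebbles, the last item of Lemma~\ref{uniquecor} permits at most one of its vertices to be 2-reachable; if it holds exactly one, point~3 (at most one pebble can be moved onto an occupied slash) again permits at most one 2-reachable vertex. In both cases this contradicts the requirement that two distinct second-slash vertices be 2-reachable. The remaining case is that the second slash is also empty, and this is where the argument really differs: covering the first slash now forces the third slash to be 4-reachable, so by Lemma~\ref{uniquecor} it contains exactly four pebbles, and since its neighbors are empty and can only be fed from it, those four pebbles are the sole resource for covering all vertices of the first three slashes.

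The main obstacle is precisely this last case: I must show that four pebbles resting on a single three- or four-vertex slash cannot reach every vertex of the two empty slashes in front of it. I would argue locally, noting that an extreme vertex of the second slash (the analogue of $v_1$ in the proof of Lemma~\ref{lemmacont}) is adjacent only to the empty first slash and to one extreme vertex of the third slash, so it is 2-reachable only if that third-slash vertex already carries at least two pebbles; recycling pebbles through the empty fourth slash cannot help, since that only loses pebbles. Distributing four pebbles among the at most four vertices of the third slash and checking each placement, one finds that in every case some near-end vertex remains uncovered---an extreme vertex of the second slash, or else the middle vertex of the first slash. This contradiction finishes the proof. The only genuine bookkeeping is this finite enumeration together with the verification that pebbles cannot be usefully routed back through empty neighboring slashes; the three-vertex-end and four-vertex-end variants are handled identically, the latter being marginally easier because its extreme vertices already have unique neighbors in the second slash.
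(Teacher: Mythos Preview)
Your argument is essentially correct but takes a genuinely different route from the paper. The paper does \emph{not} redo the case analysis of Lemma~\ref{lemmacont} for wider slashes; instead it collapses $S_{7,n}$ onto $S_{6,n}$ by identifying the first and third negative diagonals, observes that any executable pebbling sequence in $S_{7,n}$ descends to one in $S_{6,n}$, so the collapsed distribution is still solvable with every inner slash $2$-reachable and has size at most $n$ --- contradicting Lemma~\ref{lemmacont} in a single stroke. Your approach, by contrast, replays the entire structure of the proof of Lemma~\ref{lemmacont} (empty end slash, augment to $P'$, invoke Lemma~\ref{uniquecor}, split on the pebble count of the second slash, and finally enumerate placements of four pebbles on the third slash). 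Both work; the paper's reduction is shorter, avoids the terminal enumeration entirely, and would iterate to wider staircases, while your direct argument is self-contained and does not rely on the $m=6$ result.

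Two small points to tighten. First, ``by the reflection symmetry of $S_{7,n}$'' is not literally available when $n$ is even, since the two end slashes then have sizes $3$ and $4$; you already handle both end-types in your final paragraph, so simply drop the symmetry appeal and treat the two ends separately from the outset. Second, in the empty-second-slash case with a three-vertex end, your enumeration can be streamlined: the extreme vertices $v_1$ and $v_4$ of the (four-vertex) second slash each have a \emph{unique} neighbour $w_1$, respectively $w_3$, in the (three-vertex) third slash, forcing $P(w_1)\ge 2$ and $P(w_3)\ge 2$, hence $P(w_1)=P(w_3)=2$ and $P(w_2)=0$; then no vertex of the first slash is reachable, since each $v_i$ can receive at most one pebble. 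This is quicker than a full placement-by-placement check.
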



\begin{proof}
Assume the contrary.  We have a solvable distribution $P$ on $S_{7,n}$ with size $n$, such that each inner slash is 2-reachable.

Now we collapse $S_{7,n}$ to $S_{6,n}$ by mapping the first and the third negative diagonal of $S_{7,n}$ to one diagonal of $S_{6,n}$.  For an example see Figure \ref{7to6}.

\begin{figure}[H]
\centering
\scalebox{\sfact}{\input{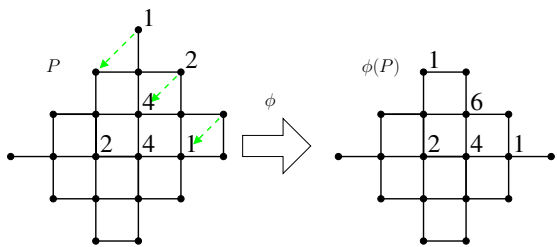}}
\caption{Collapsing $S_{7,6}$ to $S_{6,6}$.}
\label{7to6}
\end{figure}

Each pebbling sequence which is executable under $P$ determines an executable pebbling sequence in $P_{\phi}$ which moves the same amount of pebbles to the slashes and moves at least as many pebbles to the same vertices except for the deleted ones.
Therefore, $P_{\phi}$ is solvable and each inner slash is $2$-reachable. This contradicts Lemma \ref{lemmacont}.    
\end{proof}

\begin{theorem}
Let $S^*_{7,n}$ be $S_{7,n}$ or $S_{7,n}'$, then
$$n+1\leq \pi_{\opt}(S^*_{7,n})\leq n+3.$$
The lower bound is sharp for graphs $S_{7,5}$, $S_{7,6}$, $S_{7,7}$, $S_{7,8}$ and every $S'_{7,n}$ where $n\equiv 3\bmod 4$.		
\end{theorem}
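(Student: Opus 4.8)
The plan is to prove the two bounds separately: the lower bound $\pi_{\opt}(S^*_{7,n})\ge n+1$ by the same induction-and-cutting scheme used in the narrower cases, now driven by Lemma~\ref{lemmacont7}; the upper bound $\pi_{\opt}(S^*_{7,n})\le n+3$ by exhibiting an explicit density-one solvable distribution; and the sharpness claims by displaying distributions of size exactly $n+1$ for the listed graphs.

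For the lower bound I would argue by strong induction on $n$, handling $S_{7,n}$ and $S'_{7,n}$ simultaneously as in Theorem~\ref{theorem3stairs}. First I note that Lemma~\ref{lemmacont7} applies to both types: its proof collapses the seven-wide strip onto a six-wide one, and since width six admits a single isomorphism class, Lemma~\ref{lemmacont} covers the collapsed image in either case. Suppose toward a contradiction that some $S^*_{7,n}$, with $n$ beyond the explicitly checked base cases, carries a solvable distribution $P$ with $|P|\le n$. By Lemma~\ref{lemmacont7} some inner slash $k$ is not $2$-reachable, so Lemma~\ref{cancutlemma} lets me cut between $k$ and one of its neighbours. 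The two resulting pieces are seven-wide staircases $S^*_{7,c}$ and $S^*_{7,n-c}$ with $1\le c<n$, each carrying a solvable induced distribution, and $|P|$ is the sum of their sizes. The induction hypothesis gives $|P|\ge (c+1)+(n-c+1)=n+2$, contradicting $|P|\le n$; hence $\pi_{\opt}(S^*_{7,n})\ge n+1$. The only care needed is to install enough small base cases (both types, all lengths below where the induction takes over) so that every cut lands inside the inductive range.

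For the upper bound I would produce a density-one distribution: a periodic pattern of period four slashes carrying four pebbles per period, placed along the central negative diagonal so that each block of four pebbles cooperates with a neighbouring block to reach the extreme (top and bottom) vertices of each slash in its window. This is exactly the restriction to one seven-diagonal group of the square-grid distribution described in the introduction, whose defining property is that the pebbles on the middle diagonal reach every vertex of the group. Restricting this infinite pattern to the $n$ slashes of $S^*_{7,n}$ solves all interior vertices with about $n$ pebbles; I would then patch the two truncated ends with a bounded number of additional pebbles. A short case analysis on $n\bmod 4$ and on the type ($S$ versus $S'$) shows that at most three extra pebbles suffice, yielding $\pi_{\opt}(S^*_{7,n})\le n+3$.

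The sharpness claims refine this end analysis. For the finite graphs $S_{7,5},S_{7,6},S_{7,7},S_{7,8}$ I would simply display solvable distributions of size $n+1$, one per residue class modulo four, and verify them by hand or computer as in the narrower cases. For the infinite family $S'_{7,n}$ with $n\equiv 3\pmod 4$, writing $n=4t+3$, I would place $t+1$ value-four pebbles along the central diagonal for a total size of $4(t+1)=n+1$, and check that for this type and residue the two truncated ends align with the periodic pattern so that no extra boundary pebble is needed. The main obstacle is precisely this upper-bound construction: unlike the narrow cases, the extreme vertices of a slash sit at distance three from the central diagonal, so no single value-four pebble can reach them, and solvability hinges on a careful verification that two adjacent pebble blocks can cooperate (pooling one pebble each on a common neighbour) to reach those vertices, and that this cooperation survives truncation at both ends with total overhead at most three, respectively exactly one in the sharp cases.
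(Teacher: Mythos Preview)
Your lower-bound argument is exactly the paper's, only rephrased as strong induction rather than minimal counterexample: assume a solvable $P$ with $|P|\le n$, invoke Lemma~\ref{lemmacont7} (which, as you note, extends to $S'_{7,n}$ since the collapse lands in the single width-six class), cut via Lemma~\ref{cancutlemma}, and observe that one piece carries at most as many pebbles as it has slashes. Your upper-bound scheme---restrict the period-four grid pattern and patch the two ends with $O(1)$ pebbles, case-splitting on $n\bmod 4$ and on type---is also what the paper does; the paper just lists the concatenations explicitly (e.g.\ $S'_{7,4k+1}$ is built as $S'_{7,6}$ glued to $S'_{7,4(k-2)+3}$).

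The one place your plan goes wrong is the sharpness construction for $S'_{7,4t+3}$. Placing $t+1$ value-four pebbles purely on the central negative diagonal cannot solve the degree-one corner vertices. The central diagonal meets only the even-numbered slashes, so the nearest four-pebble to a corner of slash~$1$ sits at distance~$3$, and the next one at distance~$\ge 5$; even pooling both blocks you get at most $\lfloor 4/2^3\rfloor+\lfloor 4/2^5\rfloor=0$ pebbles to the corner, and no routing does better. The cooperation trick you describe works for \emph{interior} extreme vertices (each flanked by two four-blocks at distance~$3$), but fails at the truncated ends. The paper's construction for $S'_{7,4k+3}$ therefore does not use $k+1$ four-blocks; it uses $k$ four-blocks together with a single pebble on each vertex adjacent to a degree-one vertex, and it is those single boundary pebbles that make the corners reachable while keeping the total at $4k+4=n+1$.
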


\begin{figure}[htb]
\centering
\includegraphics[scale=\sfact]{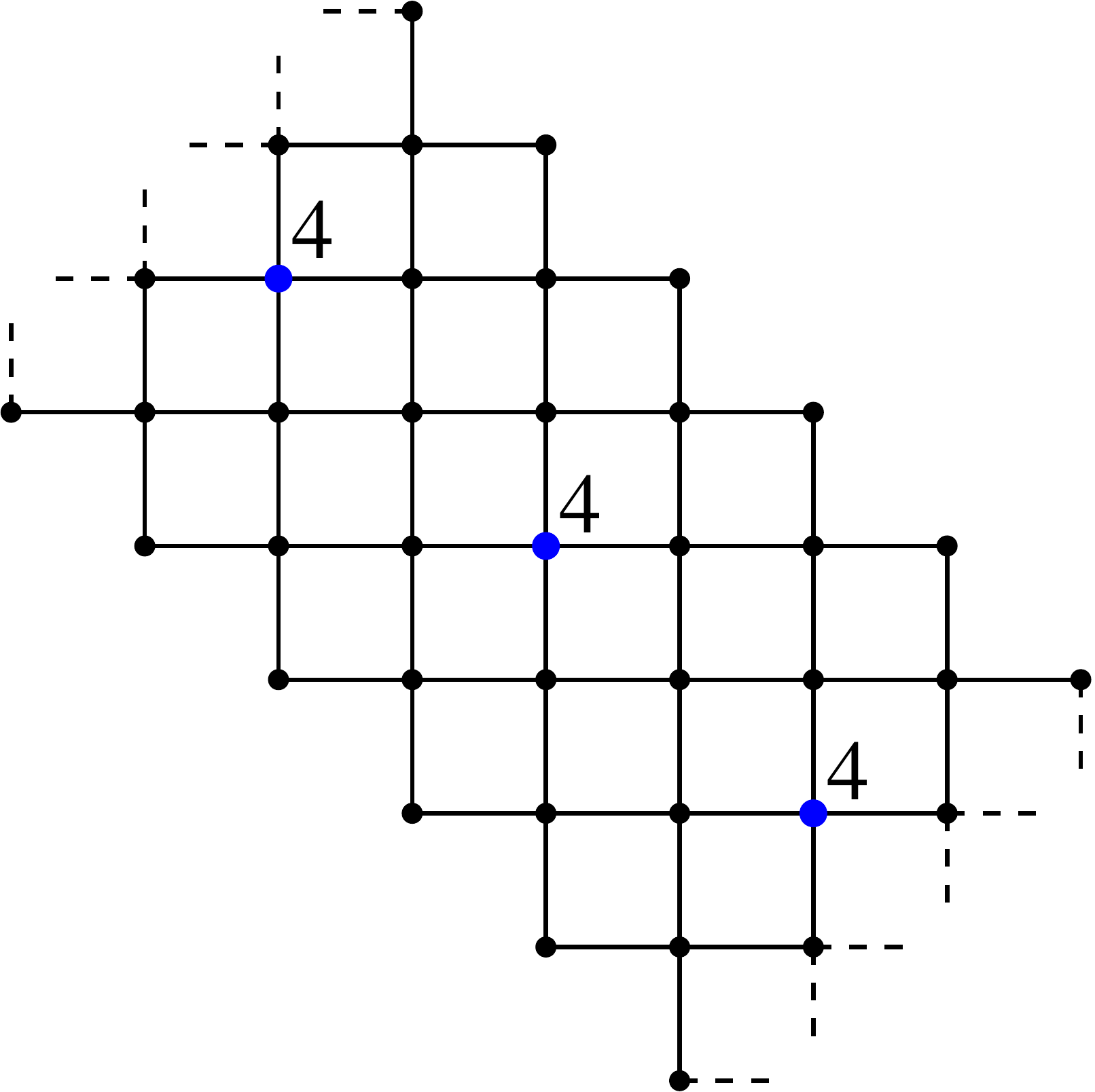} 
\caption{General pattern to make solvable distributions for large graphs with $n+O(1)$ pebbles.}
\label{general7}
\end{figure}

\begin{figure}
\centering
\scalebox{\sfact}{\input{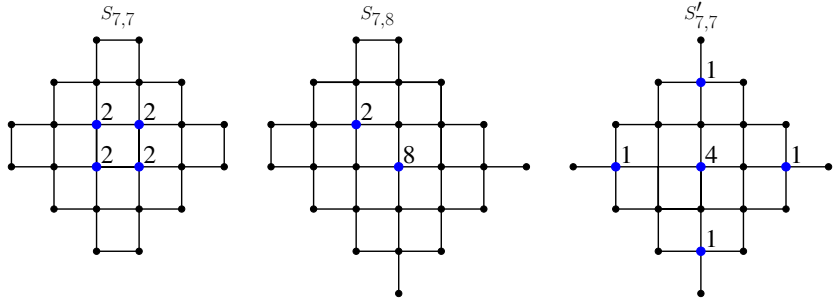}}

\caption{The optimal distributions of small $S_{7,n}$ and $S'_{7,n}$ graphs are pictured. For the $n\leq 6$ cases, see the previous figures.}
\label{S7}
\end{figure}

The proof of the lower bound is slightly different compared to the previous proofs. The main reason is that we do not have a tool for handling distributions with $n+1$ pebbles. $n+1$ pebbles is enough to construct a 2-solvable distribution on the $n$-vertex path, so we can not guarantee a cut which creates a smaller instance of the problem. 

We overcome this difficulty with an indirect assumption. We assume that there is a minimal counterexample with at most $n$ pebbles. In contrast, we show that it is not minimal. 

Unfortunately we cannot apply this idea for values larger than $n+1$. Therefore we cannot prove an exact formula for $\pi_{\opt}(S^*_7)$.

\newpage
\begin{proof}
We prove the upper bound first. For solvable small distributions see Figure \ref{S7}, and for a general pattern see Figure \ref{general7}. 

The optimal distribution for $S'_{7,4k+3}$ is obtained by putting 1 pebble on the vertices neighboring the degree 1 vertices, and 4 pebbles at $k$ vertices as shown in the general pattern.
The solvable distribution of $S'_{7,4k+1}$ is the concatenation of $S'_{7,6}$ and $S'_{7,4(k-2)+3}$, which has $4k+3$ pebbles. Similarly the solvable distribution of $S_{7,4k}$ is the concatenation of $S_{7,5}$ and $S'_{7,4(k-2)+3}$, which has $4k+2$ pebbles. The solvable distribution of $S_{7,4k+1}$ is given by the general pattern with $4k+4$ pebbles. We add two more slashes to this such that the first has two pebbles to obtain a distribution for $S_{7,4k+3}$ with $4k+6$ pebbles.  For $S_{7,4k+2}$ the optimal distribution of $S'_{7,4k+3}$ can be used with $4k+4$ pebbles.  

To prove the lower bound, assume that the statement is not true, and we let $G=S^*_{7,n}$ be the smallest counterexample. This means that there is a solvable distribution on $G$ with $n$ pebbles. By Lemma \ref{lemmacont7} and Lemma \ref{cancutlemma} we can break the graph into two smaller parts, and $P$ induces a solvable distribution on both. One of the graphs has at most as many pebbles as slashes, but this means that we found a smaller counterexample, which is a contradiction.
\end{proof}

\begin{conjecture}
Each solvable distribution which was mentioned in the proof is optimal, which implies the following when $n\geq 10$:
$$\pi_{\opt}(S_{7,n})=\begin{cases}
n+2 &\text{if }n\equiv 0 \mod 2\\
n+3 &\text{if }n\equiv 1 \mod 2\\
\end{cases}$$ 
$$\pi_{\opt}(S'_{7,n})=\begin{cases}
n+2 &\text{if }n\equiv 1 \mod 4\\
n+1 &\text{if }n\equiv 3 \mod 4.
\end{cases}$$ 
\end{conjecture}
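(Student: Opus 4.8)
The plan is to derive the conjectured values by proving the matching lower bounds, the upper bounds being already supplied by the solvable distributions in the proof of the bracketing theorem; note also that the case $S'_{7,n}$, $n\equiv 3\pmod 4$, is already settled, since there the bracketing theorem shows $n+1$ is sharp. Write $f(G)$ for the conjectured value of $\pi_{\opt}(G)$ and argue by strong induction on $n$, using the conjecture for all smaller staircases together with the universal bound $\pi_{\opt}(S^*_{7,\ell})\ge\ell+1$ and the exact small-case values. Suppose $G=S^*_{7,n}$ with $n\ge 10$ admits a solvable $P$ with $|P|<f(G)$, and distinguish whether some inner slash fails to be $2$-reachable.

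If some inner slash is not $2$-reachable, Lemma~\ref{cancutlemma} cuts $G$ into two staircases $S^*_{7,\ell_1}$ and $S^*_{7,\ell_2}$ with $\ell_1+\ell_2=n$, and $|P|\ge\pi_{\opt}(S^*_{7,\ell_1})+\pi_{\opt}(S^*_{7,\ell_2})\ge(\ell_1+1)+(\ell_2+1)=n+2$, where the last inequality is just the universal per-factor bound. For the targets equal to $n+2$ (namely $S_{7,n}$ with $n$ even and $S'_{7,n}$ with $n\equiv 1\pmod 4$) this already contradicts $|P|=n+1$, so those cases reduce entirely to the rigid regime in which every inner slash is $2$-reachable and $|P|=n+1$. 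Combined with Lemma~\ref{lemmacont7}, which excludes $|P|\le n$ in that regime, the whole of the $n+2$ targets rests on a single statement.

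That statement is a residue-sensitive strengthening of Lemma~\ref{lemmacont7}: if every inner slash of $S^*_{7,n}$ is $2$-reachable under a solvable distribution of size $n+1$, then $G=S'_{7,n}$ with $n\equiv 3\pmod 4$. I would prove it following the template of Lemma~\ref{lemmacont}: show that a non-$2$-reachable end slash must be empty, pad each deficient end with a single pebble, and collapse to $P_n$ by the map of Lemma~\ref{existsnot2reachable}. The padded distribution then has exactly $n+1$ pebbles with every slash $2$-reachable, so by Theorem~\ref{2optimal} its image is $2$-optimal; Lemma~\ref{primesegments} supplies the prime-segment structure and Lemma~\ref{uniquecor} dictates, slash by slash, which single vertex may receive a moved pebble. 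Since the slashes of a $7$-wide staircase alternate between four and three vertices, demanding that every vertex of every slash be reachable fixes the phase of the prime-segment pattern modulo $4$; the arithmetic should close up consistently only for the primed graph with $n\equiv 3\pmod 4$, producing exactly the claimed exception and excluding all remaining $n+1$ configurations.

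The genuine obstacle is the target $n+3$ for $S_{7,n}$ with $n$ odd, where one must additionally exclude $|P|=n+2$. Here the cut inequality only yields $|P|\ge n+2$, which does not contradict $|P|=n+2$; worse, equality forces both factors to be \emph{exactly} optimal, i.e.\ each a length-sharp block (an unprimed $S_{7,\ell}$ with $\ell\in\{5,6,7,8\}$ or a primed block with $\ell\equiv 3\pmod 4$). Thus even when a cut exists it must be refuted by showing that two such rigid sharp blocks cannot be joined across a non-$2$-reachable slash into a globally solvable distribution --- a boundary-rigidity argument drawing on the explicit form of the sharp small distributions. The remaining, no-cut subcase ($|P|=n+2$, every inner slash $2$-reachable) is the hardest: collapsing to $P_n$ now gives a $2$-solvable distribution with one pebble above the $2$-optimal value, so Lemma~\ref{primesegments} no longer applies. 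I would address it either by establishing a ``one-above-optimal'' classification of $2$-solvable path distributions (prime segments carrying a single localized defect) and eliminating each defect type against the alternating $3/4$-vertex constraints, or by collapsing only as far as $S_{6,n}$ and invoking a strengthened companion of Lemma~\ref{lemmacont}. Making this near-optimal analysis precise, together with the sharp-join refutation, is where I expect essentially all the difficulty to concentrate.
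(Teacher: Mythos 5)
You should first be aware that this statement is a \emph{conjecture} in the paper: the authors prove only $n+1\leq \pi_{\opt}(S^*_{7,n})\leq n+3$ and explicitly explain why their machinery stops there (``$n+1$ pebbles is enough to construct a 2-solvable distribution on the $n$-vertex path, so we can not guarantee a cut''), so there is no proof in the paper to match yours against. Your proposal does not close the conjecture either; it is a reduction plan whose load-bearing steps are left unproved. Concretely: (i) your ``residue-sensitive strengthening of Lemma~\ref{lemmacont7}'' is only sketched, and the sketch contains a technical slip --- in that setting $|P|$ is already $n+1$, so padding a deficient end slash with an extra pebble produces a distribution of size $n+2$ or $n+3$, whose collapse to $P_n$ is \emph{not} 2-optimal; Theorem~\ref{2optimal} and Lemma~\ref{primesegments} then give you no structure at all. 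The template of Lemma~\ref{lemmacont} works precisely because there $|P|\leq n$, so the padded size is exactly $n+1$. Your phrase ``the arithmetic should close up consistently only for the primed graph with $n\equiv 3\pmod 4$'' is a hope, not an argument, and it is exactly the open content of the conjecture for the $n+2$ targets.

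(ii) For the $n+3$ targets ($S_{7,n}$, $n$ odd) you candidly concede both remaining subcases: the ``sharp-join'' rigidity claim (that two length-sharp blocks cannot be glued across a non-2-reachable slash) and the classification of 2-solvable path distributions one pebble above 2-optimal. Neither exists in the paper or in the cited literature, and the second is precisely the obstruction the authors name when they say they ``cannot apply this idea for values larger than $n+1$.'' What your write-up does get right, and cleanly, is the bookkeeping around the cut: if an inner slash is not 2-reachable, Lemma~\ref{cancutlemma} plus the proven per-piece bound $\pi_{\opt}(S^*_{7,\ell})\geq \ell+1$ forces $|P|\geq n+2$, which correctly isolates the rigid regime for the $n+2$ cases and identifies that equality in a cut forces both pieces to be length-sharp (though you should verify the bound $\ell+1$ separately for the degenerate short pieces $\ell\leq 4$ that a cut near an end can produce). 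So the proposal is a reasonable map of where the difficulty lives, but as a proof it has genuine gaps at exactly the points where the conjecture is open.
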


\newpage
\section{Wider staircases}

\begin{question}
What is the optimal pebbling number of $S_{8,n}$? 
\end{question}
We determined the asymptotic behavior when $n \le 7$, but we think that the general behavior of the eight-wide case differs from the seven-wide case. We obtained $\pi_{\opt}(S_{8,8})=11$ by computer search. Unfortunately, even the $n=9$ case requires more computational power than an average PC has.  
We have some partial results:

Let $P$ be an optimal distribution of $S_{m,n}$. Expand $S_{m,n}$ with an additional $m+1$th negative diagonal to obtain $S_{m+1,n}$. Construct $P'$ from $P$ by placing additional pebbles at intersections of the $m$th negative diagonal and every fourth slash starting from the second one. Put a pebble to the last vertex of the $m$th negative diagonal if it has not obtained one yet. For an example see Figure \ref{example8}. 

\begin{figure}[htb]
\centering
\includegraphics[scale=\sfact]{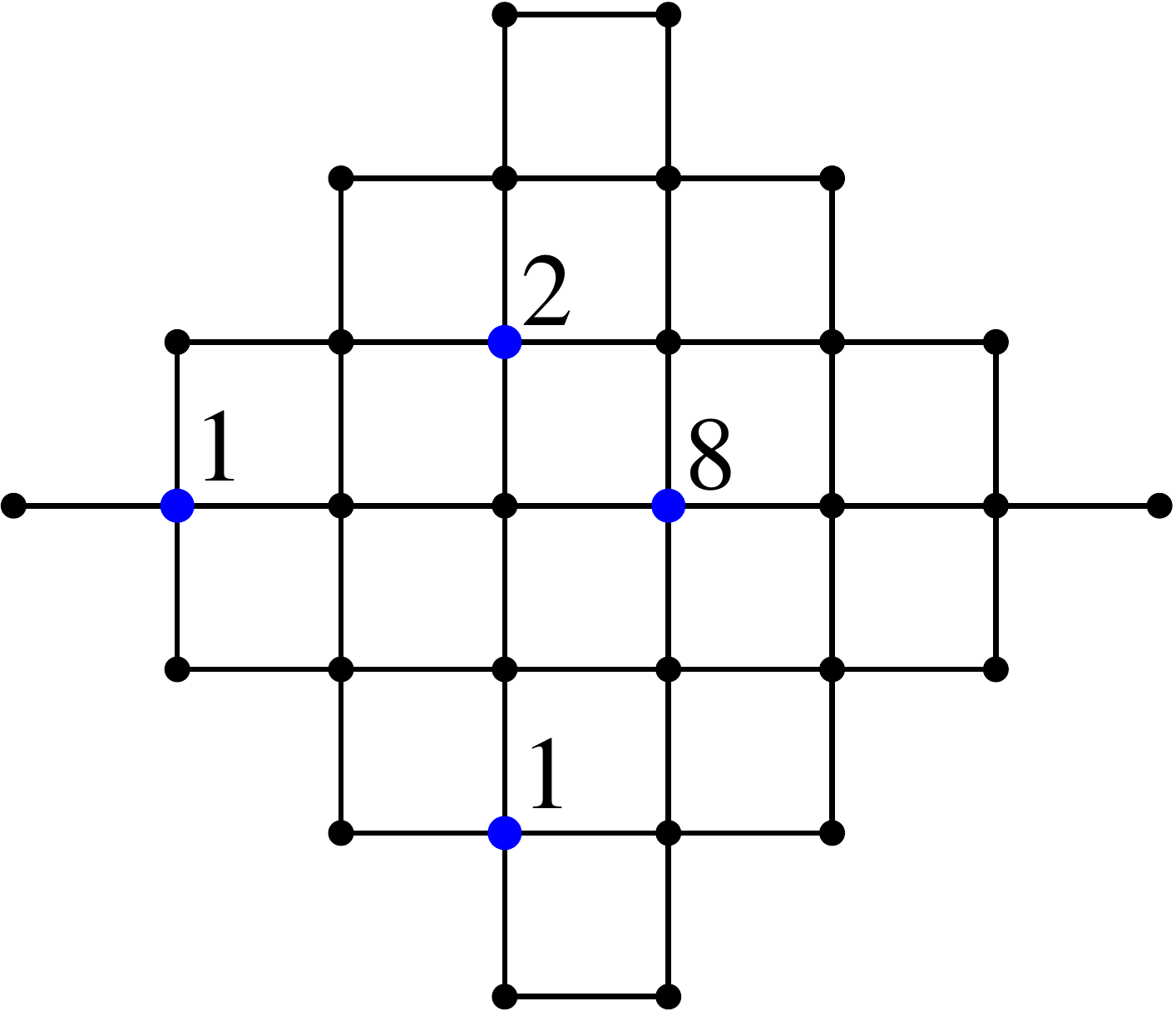} 
\caption{A solvable distribution of $S_{8,8}$. Note that it is not optimal. }
\label{example8}
\end{figure}

\begin{claim}
$P'$ is a solvable distribution of $S_{m+1,n}$.
\end{claim}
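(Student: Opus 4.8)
The plan is to partition $V(S_{m+1,n})$ into the \emph{old} vertices, those already present in $S_{m,n}$, and the \emph{new} vertices, those lying on the added diagonal $D^-_{m+1}$, and to verify that every vertex is reachable under $P'$.

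The old vertices are immediate. The graph $S_{m,n}$ is an induced subgraph of $S_{m+1,n}$, and $P'$ restricted to $V(S_{m,n})$ dominates $P$ pointwise, since the construction only adds pebbles there. As adding pebbles cannot decrease reachability and every pebbling sequence valid inside $S_{m,n}$ remains valid in $S_{m+1,n}$, each old vertex, being reachable under the solvable distribution $P$, is reachable under $P'$.

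For the new vertices I would first record the structure of the outer boundary: the vertices of $D^-_m\cup D^-_{m+1}$ induce a path $Q$ along which the two diagonals strictly alternate, so each $v\in D^-_{m+1}$ is adjacent only to its (one or two) $D^-_m$-neighbours on $Q$. Hence the only way to reach such a $v$ is to first obtain two pebbles on a $D^-_m$-neighbour $w$ of $v$. The key mechanism is the observation that if $w\in D^-_m$ is reachable under $P$ and $P'(w)\ge P(w)+1$, then $w$ is $2$-reachable under $P'$: reserve one added pebble on $w$, route a single pebble to $w$ using a sequence witnessing reachability under $P$ (this is valid under $P'$ because $P'$ dominates $P$), and the untouched reserved pebble then brings the count on $w$ to at least $2$; a final move places a pebble on $v$.

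It remains to guarantee that every $v\in D^-_{m+1}$ has a $D^-_m$-neighbour carrying an added pebble. This is where the period-$4$ placement enters: the added pebbles sit on the intersections of $D^-_m$ with slashes $2,6,10,\dots$, that is, on every second vertex of $D^-_m$ along $Q$, while the vertices of $D^-_{m+1}$ occupy slashes of the opposite parity; a short check modulo $4$ shows that each vertex of $D^-_{m+1}$ is then adjacent along $Q$ to a pebbled vertex, the only possible exception being a single vertex near the far end of the staircase, which is precisely the one covered by the extra pebble placed on the last vertex of $D^-_m$. I expect this endpoint bookkeeping to be the main obstacle: making the coverage argument precise requires pinning down which slashes actually meet $D^-_m$ (this depends on the parity of $m$ and on whether $G$ is of type $S$ or $S'$) and checking, for each residue of $n$ modulo $4$, that the lone extra pebble at the end indeed reaches the last new vertex.
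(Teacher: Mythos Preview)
Your proposal is correct and follows essentially the same approach as the paper: old vertices are reachable because $P'$ dominates the solvable $P$ on $S_{m,n}$, and each new vertex on $D^-_{m+1}$ is reachable because it is adjacent to a $D^-_m$-vertex that received an extra pebble and is therefore $2$-reachable. The paper compresses all of this into two sentences and simply asserts the adjacency claim without the modular bookkeeping or the endpoint check that you (rightly) flag as the detail needing care.
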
  

\begin{proof}
The vertices where we placed additional pebbles are $2$-reachable under $P'$. Each vertex of the 8th negative diagonal is adjacent to such a vertex, thus it is reachable.
\end{proof}
We believe that if for each optimal pebbling distribution $P$ for $S_{7,n}$ we form the distribution $P'$, then these distributions will have size equal to the optimal pebbling number of $S_{8,n}$ asymptotically.
\begin{conjecture}
$$  \pi_{\opt}(S_{8,n}) = \frac{5}{4}n + O(1). $$
\end{conjecture}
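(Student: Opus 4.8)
The plan is to prove the two bounds separately, since they are of very different difficulty. The upper bound $\pi_{\opt}(S_{8,n}) \le \frac54 n + O(1)$ is already within reach of the tools developed in this paper: I would take an optimal distribution $P$ of $S_{7,n}$, which by the seven-wide theorem has $|P| = n + O(1)$, and apply the construction of the preceding Claim with $m = 7$. That construction adds one pebble on the seventh negative diagonal at every fourth slash, plus at most one extra pebble at the end, i.e. $\lceil n/4 \rceil + O(1)$ pebbles in total, and the Claim guarantees the resulting distribution $P'$ is solvable on $S_{8,n}$. Hence $\pi_{\opt}(S_{8,n}) \le (n + O(1)) + (\tfrac{n}{4} + O(1)) = \frac54 n + O(1)$, so the upper half of the conjecture comes with an explicit witness family and needs no new idea.

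The real content is the matching lower bound $\pi_{\opt}(S_{8,n}) \ge \frac54 n - O(1)$, and I would first record why the narrower-case techniques are insufficient. The cut-and-collapse method of Lemma~\ref{cancutlemma} and Claim~\ref{lowerbound} ultimately compares a solvable distribution on the staircase to a $2$-solvable distribution on $P_n$, and Theorem~\ref{2optimal} caps what this can extract at $n + O(1)$ pebbles, i.e. coefficient $1$; it is structurally blind to the extra $n/4$. A direct fractional weight-function bound is even weaker: since each slash of $S_{8,n}$ meets only about four negative diagonals, one has $|V| \approx 4n$, and the averaged estimate $|P| \ge |V|/\max_u \sum_v 2^{-d(u,v)}$ returns a coefficient well below $1$. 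A genuinely width-sensitive argument is therefore required.

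My proposed route is a refined collapse that preserves the internal width of a slash. Instead of collapsing each slash to a single vertex of $P_n$, I would collapse $S_{8,n}$ onto a two-track quotient, a ladder $P_2 \sq P_{n'}$, by grouping its eight negative diagonals into two tracks, and then argue, in the spirit of Lemma~\ref{uniquecor} and Lemma~\ref{lemmacont7}, that a solvable distribution on $S_{8,n}$ induces a $2$-solvable distribution on this ladder. The lower bound would then reduce to a ladder analogue of Theorem~\ref{2optimal}, namely that any $2$-solvable distribution of the ladder needs $\frac54 n + O(1)$ pebbles. Establishing this ladder bound, a clean statement about a single well-understood family, would be the technical core, and I would attempt it by the prime-segment analysis of Lemma~\ref{primesegments}, now run on two interacting tracks rather than one.

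The step I expect to be the main obstacle is proving that the collapsed distribution is genuinely $2$-solvable, because the pebbles used to reach the outermost (eighth) diagonal can simultaneously help reach the deep central vertices. This entanglement is precisely what defeats a naive ``the core costs $n$, the outer diagonal costs $n/4$, add them'' argument: the two costs are not additive slash-by-slash. I would try to break the entanglement with a discharging argument that assigns each pebble a budget split between the core vertices and the outer-diagonal vertices it can serve, or alternatively by strengthening the inductive hypothesis in the cut-based proof so that it carries a reachability surplus across each cut and charges a deficit on one side against the neighbouring block. Should neither fully localize the argument, the fallback is a finite case analysis of the occupied patterns inside any window of a few consecutive slashes, validated by the same computer search that already settled $\pi_{\opt}(S_{8,8}) = 11$, combined with induction on $n$.
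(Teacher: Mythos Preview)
This statement is a Conjecture in the paper, and the paper gives no proof. The only thing the paper establishes is the upper bound, via the Claim immediately preceding the conjecture, exactly as you describe in your first paragraph. So your upper-bound argument is correct and coincides with the paper's; there is nothing further to compare on that side.

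For the lower bound the paper offers nothing, so your proposal is a research plan against an open problem rather than an alternative to an existing argument. As a plan it has concrete gaps beyond the entanglement issue you already flag. First, grouping the eight negative diagonals into two blocks of four does not produce $P_2 \sq P_n$ as a quotient: the only inter-block edges in $S_{8,n}$ run between negative diagonals $4$ and $5$, and those connect slash $s$ on one side to slashes $s\pm 1$ on the other, so the quotient carries diagonal rungs rather than vertical ones. Second, the assertion that the $2$-optimal pebbling number of whichever quotient you obtain equals $\frac54 n + O(1)$ is itself unproved and of comparable difficulty to the conjecture; no ladder analogue of Theorem~\ref{2optimal} is available in the cited literature, and there is no a priori reason the constant should come out to $\frac54$. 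Third, and most fundamentally, collapsing only transfers $1$-reachability: solvability of $P$ on $S_{8,n}$ yields solvability of $P_\phi$ on the quotient, not $2$-solvability, and the lemmas you invoke (Lemma~\ref{uniquecor}, Lemma~\ref{lemmacont7}) use $2$-reachability of each slash as a hypothesis to be contradicted, not as a conclusion to be derived from solvability upstairs. You are candid that this last step is the crux, but absent a mechanism for it the proposal remains a sketch of where a proof might live, not a proof.
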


Finally, it seems that by simply duplicating an optimal distribution for $S_{7,n}$ $k$ times, we can obtain a distribution of asymptotically optimal size for $S_{7k,n}$.
\begin{conjecture}
For all $k\ge 1$, we have
$$  \pi_{\opt}(S_{7k,n}) = kn + O(1). $$
\end{conjecture}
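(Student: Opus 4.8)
The plan is to prove the two bounds separately, with the upper bound following routinely from the construction already suggested in the text and the lower bound being the genuine difficulty.

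For the upper bound I would slice $S_{7k,n}$ in the direction of the negative diagonals into $k$ consecutive bands, each consisting of seven negative diagonals. Each band is, by definition, an induced width-seven staircase of the same length $n$ (of type $S_{7,\,\cdot}$ or $S'_{7,\,\cdot}$, and of length differing from $n$ by at most a constant coming from the two ends). Placing in each band one of the solvable distributions guaranteed by the width-seven theorem (of size at most $n+3$) produces a distribution $P'$ on $S_{7k,n}$. Since every vertex lies in some band and is reachable using only the pebbling moves internal to that band — the inter-band edges of $S_{7k,n}$ merely create additional available moves and hence cannot destroy solvability — the distribution $P'$ is solvable, and $|P'|\le k(n+3)=kn+O(1)$ for fixed $k$. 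This already gives $\pi_{\opt}(S_{7k,n})\le kn+O(1)$.

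The lower bound $\pi_{\opt}(S_{7k,n})\ge kn-O(1)$ is the crux. I would try to mimic the inductive cutting scheme of Claim~\ref{lowerbound}: induct on $n$, and show that any solvable $P$ with too few pebbles must admit an inner slash across which no pebbling move is possible, so that Lemma~\ref{cancutlemma} lets me cut in the length direction into two shorter width-$7k$ staircases and close the induction. The obstacle is that the trigger for such a cut used in the narrow cases — a slash that is not $2$-reachable, obtained from the path $2$-optimality of Theorem~\ref{2optimal} by collapsing each slash to a single vertex — is far too weak here: collapsing $S_{7k,n}$ onto $P_n$ only forces about $n$ pebbles, not $kn$. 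The correct statement should be a width-$7k$ analogue of Lemma~\ref{uniquecor}: a solvable distribution of size at most $kn-O(1)$ must contain an inner slash that is not $c_k$-reachable for an appropriate threshold $c_k=\Theta(k)$, reflecting that a fat slash of roughly $7k/2$ vertices, together with its neighbours, can be covered only if about $k$ pebbles' worth of mass can be delivered to it.

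The heart of the argument, and the step I expect to be hardest, is the bookkeeping in the width direction. Unlike the length direction, pebbles move freely between the seven-diagonal bands, so there is no barrier to cut along and no clean sub-additivity across the width. I would attempt to prove the per-slash requirement by a local covering estimate — fixing a set of $\Theta(k)$ targets per slash that are pairwise far apart across the width and summing the weight inequality $\sum_u P(u)2^{-d(u,v)}\ge 1$ over all chosen targets — which should yield a lower bound of the correct order $\Theta(kn)$; the delicate part is sharpening the constant to exactly $k$ so as to match the construction and obtain $kn+O(1)$ rather than merely $\Omega(kn)$. If the exact constant resists a direct weight argument, the fallback is to establish the width-$7k$ reachability obstruction by iterated collapsing, exactly as in the $7\to6$ reduction of Lemma~\ref{lemmacont7}, peeling off one band at a time and arguing that each inner band must absorb an essentially independent $n-O(1)$ pebbles; this would give the induction-in-$k$ inequality $\pi_{\opt}(S_{7k,n})\ge \pi_{\opt}(S_{7,n})+\pi_{\opt}(S_{7(k-1),n})-O(1)$ and hence the claimed asymptotics.
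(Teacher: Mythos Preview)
The statement you are attempting to prove is presented in the paper as a \emph{conjecture}, not a theorem; the paper offers no proof, only the heuristic remark that duplicating an optimal $S_{7,n}$ distribution $k$ times ought to be asymptotically optimal. So there is no paper proof to compare against: your proposal is an attack on an open problem.

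Your upper bound is correct and is exactly the construction the paper has in mind.

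The lower bound, however, has a genuine gap in both of your suggested routes. The iterated-collapsing idea cannot produce the additive inequality $\pi_{\opt}(S_{7k,n})\ge \pi_{\opt}(S_{7,n})+\pi_{\opt}(S_{7(k-1),n})-O(1)$ you want: collapsing a graph to a quotient only shows that the quotient's optimal pebbling number is \emph{at most} that of the original, because a solvable distribution projects to a solvable distribution of the same size. The $7\to 6$ reduction in Lemma~\ref{lemmacont7} works precisely this way---it transfers a hypothetical too-small solvable distribution on $S_{7,n}$ to one on $S_{6,n}$ and invokes a lemma already proved there---and it never yields an additive increment. There is no evident mechanism to make it do so, since pebbles move freely across the seven-diagonal bands; that is the very reason the paper only gets $\pi_{\opt}(S^*_{7,n})\ge n+1$ and leaves the exact value conjectural even for $k=1$. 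Your weight-function route does give $\Omega(kn)$, but the constant it delivers is well below $1$: summing $\sum_u P(u)\,2^{-d(u,v)}\ge 1$ over all vertices and using $\sum_v 2^{-d(u,v)}\le 9$ on the grid yields only $|P|\ge |V|/9\approx 7kn/18$, and thinning the target set does not obviously lift the constant to $1$. Closing that gap is exactly the content of the conjecture, and neither of your approaches does it.
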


\section*{Acknowledgment}

The researach of Ervin Gy\H{o}ri and Gyula Y. Katona is partially
suported by National Research, Development and Innovation Office
NKFIH, grant K116769. The research of Gyula Y. Katona and L\'aszl\'o F. Papp is partially
suported by National Research, Development and Innovation Office
NKFIH, grant K108947.

\end{document}